\newcommand\Autoref[1]{\@first@ref#1,@}
\def\@throw@dot#1.#2@{#1}
\def\@set@refname#1{
    \edef\@tmp{\getrefbykeydefault{#1}{anchor}{}}%
    \def\@refname{\@nameuse{\expandafter\@throw@dot\@tmp.@autorefname}s}%
}
\def\@first@ref#1,#2{%
  \ifx#2@\autoref{#1}\let\@nextref\@gobble
  \else%
    \@set@refname{#1}
    \@refname~\ref{#1}
    \let\@nextref\@next@ref
  \fi%
  \@nextref#2%
}
\def\@next@ref#1,#2{%
   \ifx#2@ and~\ref{#1}\let\@nextref\@gobble
   \else, \ref{#1}
   \fi%
   \@nextref#2%
}
\newtheorem{thm}{Theorem}[section]
\newtheorem*{thm*}{Theorem}
\newaliascnt{prop}{thm}
\newtheorem{prop}[prop]{Proposition}
\newaliascnt{lem}{thm}
\newtheorem{lem}[lem]{Lemma}
\newaliascnt{cor}{thm}
\newtheorem{cor}[cor]{Corollary}
\theoremstyle{definition}
\newaliascnt{dfn}{thm}
\newtheorem{dfn}[dfn]{Definition}
\newaliascnt{rmk}{thm}
\newtheorem{rmk}[rmk]{Remark}
\newaliascnt{ex}{thm}
\newtheorem{ex}[ex]{Example}
\newaliascnt{ntn}{thm}
\DeclareMathOperator\ann{Ann}
\DeclareMathOperator\Ass{Ass}
\DeclareMathOperator\Der{Der}
\DeclareMathOperator\Ext{Ext}
\DeclareMathOperator\Quot{Frac} 
\DeclareMathOperator\Ht{ht}
\DeclareMathOperator\lk{link} 
\DeclareMathOperator\rad{rad}
\DeclareMathOperator{\Hom}{Hom} 
\DeclareMathOperator{\Spec}{Spec} 
\DeclareMathOperator{\supp}{supp} 
\newcommand{\vect}[1]{\mathbf{#1}}
\newcommand{\D}{\mathscr D}
\newcommand{\m}{\mathfrak m}
\newcommand{\N}{\mathbb N}
\newcommand{\Z}{\mathbb Z}
\newcommand{\SRgen}[2]{\vect x^{\vect{#1}}\underline{\partial}^{\vect{#2}}}
\newcommand{\tns}{\otimes}
\title[Local Cohomology over Stanley-Reisner Rings]{Finiteness of Associated Primes of Local Cohomology Modules over Stanley-Reisner Rings}
\author{Roberto Barerra}
\address{
	Department of Mathematics, MSC \textup{470} \\
	Texas State University \\ 
	San Marcos, TX \textup{78666-4684} 
	}	
\email{rb1222@txstate.edu}	
\author{Jeffrey Madsen}
\address{
	Department of Mathematics \\
	Purdue University \\
	\textup{150} N. University Street 
	West Lafayette, IN \textup{47907-2067}
	}
\email{madsen4@purdue.edu}
\author{Ashley K. Wheeler}
\address{
	Department of Mathematics and Statistics, MSC \textup{1911} \\
	James Madison University \\ 
	Harrisonburg, VA \textup{22807}
	}
\email{wheeleak@jmu.edu}
\thanks{The authors wish to thank Wenliang Zhang, who supervised and directed our progress at the 2015 American Mathematical Society (AMS) Mathematics Research Communities (MRC).  The AMS MRC is funded by a grant from the National Science Foundation.  We are also grateful to Mark Johnson, Laura Matusevich, and David Eisenbud, who provided general feedback and ideas for follow up work.}
\begin{document}

\date{\today\; (last updated)}

\maketitle

\begin{abstract} 
Local cohomology modules, even over a Noetherian ring $R$, are typically unwieldy.  As such, it is of interest whether or not they have finitely many associated primes.  We prove the affirmative in the case where $R$ is a Stanley-Reisner ring over a field and its associated simplicial complex is a $T$-space.  
\end{abstract}

\section{Introduction}

All rings, unless otherwise stated, are commutative, Noetherian, and with 1.  Let $R$ denote a $K$-algebra, where $K$ is a field.  Let $I\subset R$ denote an ideal and $M$ an $R$-module.  We use the symbol $H^j_I(M)$, often suppressing the parentheses, to denote the $j$th local cohomology module over $M$ with support in $I$.  Local cohomology modules are generally not Noetherian -- even over a Noetherian ring -- and so they are not well-understood.  Finiteness of associated primes of local cohomology modules simplifies computations of invariants such as cohomological dimension of the Zariski space $\Spec(R)$.  In this paper, we prove that when $R$ is a Stanley-Reisner ring over $K$ and its associated simplicial complex is a $T$-space, the set $\Ass_R(H^j_IR)$ of associated primes of $H^j_I(R)$ is finite.  

\subsection{What is known} 

C. Huneke \cite{huneke92} first asked about finiteness of associated primes of local cohomology modules in the 1990s; around the time he and R. Sharp \cite{huneke+sharp} had affirmed it for $M=R$, a regular ring in characteristic $p>0$.  In 1993, G. Lyubeznik \cite{lyubeznik93} proved if $R$ is regular containing a field of characteristic $0$ then for any maximal ideal $\m$ of $R$, the number of associated primes for $H^j_I(R)$ that are contained in $\m$ is finite.  In contrast to Frobenius methods used in characteristic $p$, Lyubeznik used the then-burgeoning theory of $\D$-modules.  Using the theory of $F$-modules, Lyubeznik \cite{lyubeznik97} extended the characteristic $p$ results by removing the local hypothesis.  In an attempt to reconcile the $F$-module and $\D$-module methods, Lyubeznik \cite{lyubeznik00} also proved $H_I^j(R)$ has finitely many associated primes when $R$ is a regular local unramified ring in mixed characteristic.  Meanwhile, he was able to give an almost characteristic free proof \cite{lyubeznik00_1} for regular rings containing a field.  In 2011 he gave a characteristic-free proof of a result in the theory of $\D$-modules that reestablished the affirmative for the case of a polynomial ring over a field \cite{lyubeznik11}, characteristic-freely.    

\subsubsection{Counterexamples}

A. Singh \cite{singh} showed $|\Ass_R(H^3_IR|<\infty$ when $R=\Z[u,v,w,x,y,z]/(ux+vy+wz)$ and $I=(x,y,z)R$.  M. Katzman \cite{katzman} showed the set of associated primes for $H^2_{(x,y)}(R)$ is finite for $R=K[s,t,u,v,x,y]/(su^2x^2-(s+t)uxvy+tv^2y^2)$, where $K$ is any field.  Singh and I. Swanson \cite{singh+swanson} generalized Katzman's results with examples of normal hypersurfaces whose local cohomology modules have infinitely many associated primes.  

\subsubsection{Recent results}

The problem remains an active direction of research as only a few other, very special, affirmative cases are known.  H. Robbins \cite{robbins} answered Huneke's question in the affirmative for $M=R$, a polynomial ring or a power series ring over a two- or three-dimensional normal domain with an isolated singularity, finitely generated over a field of characteristic $0$; B. Bhatt, et al. \cite{bblsz} proved it when $M=R$ is a smooth $\Z$-algebra; S. Takagi and R. Takashashi \cite{takagi+takahashi} proved it for $M=R$ Gorenstein of finite $F$-representation type; T. Marley \cite{marley} proved the affirmative for $\dim{R}\leq 3$, $\dim{R}=4$ when $R$ is regular on the punctured spectrum, and $\dim{R}=5$ when $R$ is an unramified regular local ring and $M$ is finitely-generated and torsion-free; Marley and J. Vassilev \cite{marley+vassilev} proved $\Ass_R(H^j_IM)$ is finite when $R$ is a regular local ring and $\dim{M}\leq 3$, $\dim{R}\leq 4$, $\dim{M/IM}\leq 2$ and $M$ satisfies Serre's condition $S_{\dim{M}-3}$, or $\dim{R/I}=3$, $\ann_RM=0$, $R$ is unramified, and $M$ is $S_{\dim{M}-3}$; M. Brodmann and L. Faghani \cite{brodmann+faghani} showed $\Ass_R(H^j_IM)$ is finite if $M$ is finitely generated and $\Ass_R(H^1_IM),\dots,\Ass_R(H^{j-1}_IM)$ are all finite; M. Hellus \cite{hellus} proved the result when $M=R$ is local Cohen-Macaulay and either $\Ass_R(H^3_{(x,y)}R)$ is finite for every $x,y\in R$ or $\Ass(H^3_{(x_1,x_2,y)}R)$ is finite for $x_1,x_2\in R$ a regular sequence and $y\in R$. 
 
\subsection{Our results}

We use arguments from Lyubeznik \cites{lyubeznik00_1, lyubeznik11} to show the set of associated primes of local cohomology modules over a Stanley-Reisner ring $R=K[\Delta]$ (see \autoref{sec:Stanley-Reisner} for the notion of a Stanley-Reisner ring) is finite, provided the associated simplicial complex $\Delta$ is a $T$-space.  A simplicial complex $\Delta$ is a \emph{$T$-space} means either its face ideal is trivial or for all pairs of faces $F,G\in\Delta$ with $G\subsetneq F$, there exists a facet $H\subset\Delta$ such that $F\subseteq H$ but $G\not\subseteq H$.  If $\Delta$ is a $T$-space then we also say the Stanley-Reisner ring $R=K[\Delta]$ is a $T$-space.  

\begin{thm*}[cf. \autoref{cor:mainThm}]
Suppose $R=K[\Delta]$ is a $T$-space.  Let $I\subset R$ denote an ideal.  Then all local cohomology modules $H^j_I(R)$ have finitely many associated primes.
\end{thm*}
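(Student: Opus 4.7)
The plan is to reduce the computation of $H^j_I(R)$ for $R=K[\Delta]$ to local cohomology of polynomial rings, and then invoke Lyubeznik's characteristic-free theorem \cite{lyubeznik11} that local cohomology over a polynomial ring has only finitely many associated primes.

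Write $R=S/J_\Delta$ with $S=K[x_1,\dots,x_n]$ and $J_\Delta=\bigcap_H P_H$, where $P_H=(x_i:i\notin H)$ ranges over the facets $H$ of $\Delta$. A key feature of Stanley--Reisner rings is that any sum $P_{H_0}+\cdots+P_{H_k}$ is again a monomial prime, so $S/(P_{H_0}+\cdots+P_{H_k})=K[H_0\cap\cdots\cap H_k]$ is a polynomial ring. Form the Mayer--Vietoris (\v{C}ech-type) complex attached to the decomposition of $\Spec R$ into its minimal components:
\[
C^\bullet:\quad 0\to R\to \bigoplus_H K[H]\to \bigoplus_{H_0<H_1} K[H_0\cap H_1]\to \cdots.
\]
Every term past position zero is a finite direct sum of polynomial rings.

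The first step is to use the T-space hypothesis to prove $C^\bullet$ is exact, so that $C^{\geq 1}$ is a resolution of $R$ by direct sums of polynomial rings. The T-space condition---the ability to find, for the relevant pairs of nested faces, a facet through one that avoids the other---supplies precisely the combinatorial data needed to construct an explicit contracting homotopy for the nerve complex of the facet cover, equivalently to witness that this nerve is acyclic. I expect this combinatorial exactness step to be the main obstacle; small cases (such as when $\Delta$ is the boundary of a simplex) should be checked by hand to make sure the framework covers them.

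With exactness established, the hypercohomology spectral sequence
\[
E_1^{p,q}=H^q_I(C^p)\;\Longrightarrow\;H^{p+q}_I(R)
\]
converges in finitely many pages since $\Delta$ is finite. By \cite{lyubeznik11}, each $E_1^{p,q}$ has finitely many associated primes over the polynomial ring $K[F]$ from which it arises, and hence over $R$ (associated primes pull back along the surjection $R\twoheadrightarrow K[F]$). The finite convergence filtration on $H^n_I(R)$ has graded pieces $E_\infty^{p,n-p}$ that are subquotients of the corresponding $E_1^{p,n-p}$; invoking the finite-length structural content of Lyubeznik's theorem---so that subquotients retain finitely many associated primes---together with the inequality $\Ass B\subseteq \Ass A\cup \Ass C$ applied iteratively along the filtration, we conclude $\Ass_R H^n_I(R)$ is finite.
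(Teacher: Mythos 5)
Your proposal takes an entirely different route from the paper. The paper computes the ring of differential operators $\D_R$ explicitly (this is exactly where the $T$-space hypothesis enters: for a $T$-space, $\D_R$ is generated over $R$ by the operators $x_i\partial_i^{t_i}$), develops a Bernstein filtration and a notion of holonomicity directly over $\D_R$, shows $R$ and each $R_f$ are holonomic with finite $\D_R$-length, and concludes via the closure properties of the category $C(R)$. You instead try to bypass $\D_R$ and reduce to polynomial rings through a nerve (Mayer--Vietoris) complex; this is a reasonable idea, but as written it has a misdiagnosed easy step and a genuine gap.

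The step you single out as the main obstacle, exactness of $C^\bullet$, does not use the $T$-space hypothesis at all. The complex is $\Z^n$-graded, and in multidegree $\vect a$ with $F=\supp(\vect a)$ the degree-$\vect a$ stratum of $C^{p+1}$ has one copy of $K$ for each $p$-simplex on the set $\{H : H \text{ a facet},\ F\subseteq H\}$; when $F\in\Delta$ that set is nonempty and the nerve is a full simplex, so the degree-$\vect a$ piece of $0\to R\to C^1\to C^2\to\cdots$ is the augmented cochain complex of a simplex, hence exact, and when $F\notin\Delta$ the whole row is zero. So the complex is exact for every Stanley--Reisner ring, and if your argument went through it would prove the result in that full generality; that should already give you pause.

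The genuine gap is the spectral-sequence step. You correctly note that ``finitely many associated primes'' is not preserved by $R$-module subquotients and appeal to the finite-$\D$-length content of Lyubeznik's theorem to repair this, but that finite length is over $\D_{K[F]}$ for a fixed polynomial ring $K[F]$, and the differentials of your spectral sequence are not $\D$-linear over any of the rings in play. The map $d_1$ is assembled from the projections $S/P_T\to S/P_{T'}$ for $T\subset T'$; writing $K[F]=K[y_1,\dots,y_m]$ and $K[F']=K[F]/(y_1,\dots,y_k)$, the kernel $(y_1,\dots,y_k)$ is not $\D_{K[F]}$-stable (already $\partial_{y_1}(y_1)=1$ escapes it). Hence $\ker d_r$ and $\operatorname{im}\, d_r$ are not $\D$-submodules, the pages $E_r^{p,q}$ for $r\ge 2$ and $E_\infty^{p,q}$ carry no $\D$-module structure, and nothing in Lyubeznik's theorem bounds $\Ass$ of an arbitrary $R$-subquotient of a holonomic module. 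This is precisely why the paper stays inside a single category of $\D_R$-modules, which is closed under subquotients, and why it needs the $T$-space hypothesis to make $\D_R$ explicit enough to run a holonomicity argument there.
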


We follow a standard approach to proving finiteness of associated primes.  Given an $R$-module, $M$, and a filtration $[0=M_0\; \subset\;  M_1\; \subset\;  \cdots\; \subset\; M_l=M]$, such that each of the factors $M_j/M_{j-1}$ has finitely many associated primes; as long as $M$ has finite length we get the result $|\Ass_RM|<\infty$ by the containment
\[
\Ass(M)\subset \bigcup_j \Ass(M_j/M_{j-1}).
\]
Unfortunately, as mentioned before, it is not at all clear whether a local cohomology module has finite length!  Lyubeznik's strategy in \cite{lyubeznik93} uses a filtration in the appropriate $R$-algebra, namely the ring of $K$-linear differential operators of $R$, denoted $\D=\D_R=D(R;K)$ (see \autoref{sec:diffOps}).  Lyubeznik shows that local cohomology modules belong to the class of \textit{holonomic} $\D$-modules, and hence, have finite $\D$-length.  The key to our result is that the combinatorial construction of a $T$-space allows one to write down explicitly its ring of differential operators.  We then broaden Lyubeznik's redefinition \cite{lyubeznik11}*{Corollary 3.6} of V. Bavula's notion of holonomicity \cite{bavula} to include $T$-spaces.

\begin{thm*}[cf. \autoref{thm:xdelx}]
Suppose $R=K[\Delta]$ is a $T$-space. Then 
$\D=D(R;K)$ is generated as an $R$-algebra by elements of the form $x\partial^t$, where $x$ is one of the indeterminates in $R$ enumerating the vertices of the complex $\Delta$, $\partial^t$ denotes the divided power $\frac{1}{t!}\frac{\partial}{\partial x}$ (see \autoref{ex:weyl}), and $t\in \Z_{\geq 0}$.  
\end{thm*}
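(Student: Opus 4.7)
The plan is to work with the ambient polynomial ring $S = K[x_1,\dots,x_n]$ and invoke the standard identification
\[
D(R;K) \;\cong\; \{D \in D(S;K) : D(I_\Delta) \subseteq I_\Delta\}\big/I_\Delta \cdot D(S;K).
\]
Since $D(S;K)$ is a free left $S$-module on the divided-power operators $\partial^{[\beta]} := \prod_i \tfrac{1}{\beta_i!}\partial_{x_i}^{\beta_i}$, every element of $D(S;K)$ is a $K$-linear combination of monomials $x^\alpha \partial^{[\beta]}$. It therefore suffices to show that each such monomial which preserves $I_\Delta$ represents, modulo $I_\Delta \cdot D(S;K)$, an element of the $R$-subalgebra $\mathcal A$ generated by the designated operators $x_i\, \partial_i^{[t]}$.

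The heart of the argument is the following combinatorial lemma. Set $\sigma := \supp(\alpha)$ and $\tau := \supp(\beta)$; I claim that if $\sigma \in \Delta$ and $x^\alpha \partial^{[\beta]}$ preserves $I_\Delta$, then $\tau \subseteq \sigma$. I would prove this by contradiction. Suppose some $j \in \tau \setminus \sigma$. The $T$-space condition is equivalent to the statement that every face of $\Delta$ is the intersection of the facets containing it, so since $j \notin \sigma$ there exists a facet $F$ with $\sigma \subseteq F$ but $j \notin F$. Define $\gamma \in \Z_{\geq 0}^n$ by $\gamma_i = \beta_i$ for $i \in \tau$, $\gamma_i = 1$ for $i \in F \setminus \tau$, and $\gamma_i = 0$ otherwise. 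Then $\supp(\gamma) = \tau \cup F$ contains $F \cup \{j\}$, which fails to be a face because $F$ is a facet, so $x^\gamma \in I_\Delta$. On the other hand, a direct computation yields $\binom{\gamma}{\beta} = 1$ (every factor $\binom{\beta_i}{\beta_i}$, $\binom{1}{0}$, or $\binom{0}{0}$ equals $1$) and $\supp(\alpha + \gamma - \beta) = \sigma \cup (F \setminus \tau) \subseteq F \in \Delta$, so $(x^\alpha \partial^{[\beta]})(x^\gamma) \notin I_\Delta$, contradicting preservation.

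With the lemma in hand, the remaining steps are short. If $\sigma \notin \Delta$ then $x^\alpha \in I_\Delta$, so $x^\alpha \partial^{[\beta]} \in I_\Delta \cdot D(S;K)$ and vanishes in the quotient. Otherwise $\tau \subseteq \sigma$, and since operators in distinct variables commute we factor $x^\alpha \partial^{[\beta]} = \prod_i x_i^{\alpha_i} \partial_i^{[\beta_i]}$. Each factor with $\beta_i = 0$ is just $x_i^{\alpha_i} \in R$; each factor with $\beta_i > 0$ forces $\alpha_i \geq 1$ (because $i \in \tau \subseteq \sigma$), so it rewrites as $x_i^{\alpha_i - 1}\cdot(x_i\, \partial_i^{[\beta_i]})$, a product of an element of $R$ and a designated generator of $\mathcal A$. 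The main obstacle is the combinatorial lemma, and it is precisely at that step where the $T$-space hypothesis is essential; without it one cannot in general produce the separating facet $F$, and indeed the argument would fail for, say, the full simplex (where $R$ is polynomial and $\mathcal A$ is visibly smaller than $D(R;K)$).
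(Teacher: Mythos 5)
Your proof is correct, and it takes a genuinely more self-contained route than the paper's. The paper invokes Traves' Theorem 3.5 twice: once to know that $\D_R$ is spanned over $K$ by the images of monomials $\vect x^{\vect a}\underline\partial^{\vect t}$, and once for the combinatorial criterion (for each minimal prime $P$ of $I_\Delta$, either $\vect x^{\vect a}\in P$ or $\vect x^{\vect t}\notin P$); it then combines that criterion with \autoref{thm:primesFacets} and the $T$-space condition to deduce $\supp(\vect t)\subseteq\supp(\vect a)$. You bypass the combinatorial criterion entirely and prove the containment $\tau\subseteq\sigma$ directly by constructing a witness monomial $x^\gamma\in I_\Delta$ (with $\supp(\gamma)\supseteq F\cup\{j\}$ a non-face but $\alpha+\gamma-\beta$ supported on the facet $F$) that the operator would send outside $I_\Delta$ — in effect reproving the needed fragment of Traves' criterion from scratch. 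That makes the argument more elementary and transparent about where the separation hypothesis enters. The one place you lean on an unjustified step is the transition ``every element of $D(S;K)$ is a $K$-linear combination of monomials, therefore it suffices to treat each monomial preserving $I_\Delta$'': that reduction is valid only because $I_\Delta$ is a monomial ideal, which forces its idealizer $\mathscr I(I_\Delta)$ to be $\Z^n$-graded and hence $K$-spanned by the monomials it contains; a general element of the idealizer need not have each term of its monomial expansion preserving $I_\Delta$, so you should state the $\Z^n$-gradedness (or cite Traves for it) rather than let the free-module fact carry the weight. Finally, your parenthetical remark about the full simplex is sharp and worth making explicit: the introduction's definition of $T$-space includes the case $I_\Delta=(0)$, and the conclusion genuinely fails there (already for $R=K[x]$, $\partial\notin R\langle x\partial^t\rangle$), whereas the definition in \autoref{sec:Stanley-Reisner} and \autoref{prop:Tspace} presuppose $I_\Delta\neq 0$ — both your argument and the paper's implicitly require the non-trivial case in order to choose a proper facet $H$ or $F$ to separate with.
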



\begin{thm*}[cf. \autoref{thm:finitelength}]
Every holonomic $\D$-module, where holonomic is in the sense of \autoref{dfn:holonomicity}, has finite $\D$-length.
\end{thm*}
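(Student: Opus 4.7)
The plan is to adapt the classical multiplicity-based proof of finite length for $\D$-modules---Bernstein's for the Weyl algebra, and Lyubeznik's \cite{lyubeznik11} reformulation for polynomial rings---to the Stanley-Reisner $T$-space setting. The essential input is the explicit generation of $\D$ provided by \autoref{thm:xdelx}: because every element of $\D$ is a polynomial in the operators $x\partial^t$ over $R$, one can build a filtration $\{\Gamma_i\}_{i\geq 0}$ of $\D$ by $K$-subspaces underlying the notion in \autoref{dfn:holonomicity}, and for each holonomic $\D$-module $M$ select a good filtration $\{M_i\}$ with $\Gamma_i M_j\subseteq M_{i+j}$. By the definition, $\dim_K M_i$ is eventually polynomial in $i$ of some minimal degree $d_0$; the normalized leading coefficient
\[
e(M):=d_0!\cdot\lim_{i\to\infty}\frac{\dim_K M_i}{i^{d_0}}
\]
is then a positive integer independent of the choice of good filtration, the \emph{multiplicity} of $M$.

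The heart of the argument is additivity of $e$ on short exact sequences. Given $0\to M'\to M\to M''\to 0$ with $M$ holonomic and $\{M_i\}$ a good filtration on $M$, the induced filtrations $M'_i:=M_i\cap M'$ and $M''_i:=(M_i+M')/M'$ are good on $M'$ and $M''$, and the short exact sequences
\[
0\longrightarrow M'_i\longrightarrow M_i\longrightarrow M''_i\longrightarrow 0
\]
of finite-dimensional $K$-spaces yield $\dim_K M_i=\dim_K M'_i+\dim_K M''_i$. Thus $M'$ and $M''$ are also holonomic, and comparing leading coefficients gives $e(M)=e(M')+e(M'')$. Finite length is then immediate: in any strictly descending chain $M=N_0\supsetneq N_1\supsetneq\cdots\supsetneq N_k$ of $\D$-submodules, each factor $N_{j-1}/N_j$ is nonzero holonomic with $e(N_{j-1}/N_j)\geq 1$, so iterated additivity gives $e(M)\geq k$, forcing every such chain to terminate.

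The principal obstacle lies in justifying the multiplicity machinery in our non-regular setting: since $R=K[\Delta]$ has zero-divisors and $\D$ is not a Weyl algebra, the associated graded ring of $\D$ under $\{\Gamma_i\}$ is neither commutative nor a polynomial ring, and the standard arguments for existence and invariance of Hilbert polynomials must be redone. The $T$-space hypothesis, through \autoref{thm:xdelx}, is precisely what tames the commutation relations among the generators $x\partial^t$ so that good filtrations pass to submodules and quotients and the leading polynomial data depend only on $M$. Verifying this combinatorial control---that $d_0$ and $e(M)$ are well-defined for every holonomic $M$, and that the induced filtrations on $M_i\cap M'$ and on $(M_i+M')/M'$ are themselves good---is the technical core of the theorem.
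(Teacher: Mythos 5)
Your proposal follows the classical Bernstein/Lyubeznik multiplicity route, but the paper's proof of \autoref{thm:finitelength} does something different and more elementary, and your route as written has a genuine gap.

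The gap: \autoref{dfn:holonomicity} only asserts the existence of a $K$-filtration $\mathscr G$ with the \emph{upper bound} $\dim_K\mathscr G_i\leq Ci^r$. It does \emph{not} assert that $\dim_K\mathscr G_i$ is eventually a polynomial in $i$, and in this non-regular setting there is no Noetherian commutative associated graded ring of $\D$ from which to extract a Hilbert polynomial. So the multiplicity $e(M)$ you define as a normalized leading coefficient is not, as things stand, well-defined; the quantity $\lim_i \dim_K M_i/i^{d_0}$ need not exist at all. You flag this as ``the technical core,'' but the proposal leaves it unresolved, and resolving it would require developing a good-filtration/Hilbert-polynomial theory for $\D_{K[\Delta]}$ that the paper deliberately avoids. (The passage of the upper bound to submodules and quotients, and additivity of $\dim_K M_i$ along short exact sequences, are fine; it is the existence and invariance of the \emph{leading term} that fails.)

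The paper's proof sidesteps all of this by replacing multiplicity with a direct lower bound. The engine is \autoref{prop:filtdim}: if $u$ is an element of a $\D$-module with $\ann_R(u)=P$ prime, then $\dim_K(\mathscr F_i u)\geq\sum_{j=0}^iH(R,j)=H_1(R,i)$, the iterated Hilbert function of $R$, which is a polynomial of degree $r$ in $i$ with leading coefficient at least $1/r!$. Given a strictly increasing $\D$-chain $0=M_0\subsetneq\cdots\subsetneq M_l=M$, one picks in each factor $N_s=M_s/M_{s-1}$ an element $u_s$ with prime annihilator and index $j_s$ with $u_s\in\mathscr G^s_{j_s}$ (the induced filtration), so $\dim_K\mathscr G^s_i\geq H_1(R,i-j_s)$. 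Summing over $s$ and comparing with the holonomicity bound gives
\[
Ci^r\ \geq\ \dim_K\mathscr G_i\ =\ \sum_{s=1}^l\dim_K\mathscr G^s_i\ \geq\ \sum_{s=1}^l H_1(R,i-j_s),
\]
and comparing leading terms yields $C\geq l/r!$, i.e.\ $l\leq Cr!$. No multiplicity, no associated graded ring, no invariance statement --- just the upper bound from the definition against a uniform lower bound from \autoref{prop:filtdim}. If you want to proceed, abandon the multiplicity construction and instead prove and use the lower bound of \autoref{prop:filtdim}; that is precisely where the $T$-space hypothesis and \autoref{thm:xdelx} do their work.
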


\begin{thm*} Suppose $R=K[\Delta]$ is a $T$-space.  Then:
\begin{itemize}
\item[(a) ](cf. \autoref{thm:Rholonomic}) $R$ is a holonomic $\D$-module.
\item[(b) ](cf. \autoref{thm:Rfholonomic}) The localized ring $R_f$ is a holonomic $\D$-module, for any $f\in R$.
\end{itemize}
\end{thm*}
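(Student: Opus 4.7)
The plan is to leverage the explicit generation of $\D$ from \autoref{thm:xdelx} to set up a Bernstein-style filtration on $\D$, then verify holonomicity by estimating the growth of the induced filtrations on $R$ and $R_f$. This parallels Lyubeznik's strategy in \cite{lyubeznik11}, adapted to the restricted generating set available in the $T$-space setting.

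For part (a), I would equip $\D$ with the filtration $\{F^n\D\}_{n\geq 0}$ in which each generator $x\partial^t$ contributes degree $t+1$, mimicking the Bernstein filtration on the Weyl algebra. The commutator of any two generators lies in strictly lower filtration level, so the associated graded ring is commutative and finitely generated over $K$. Now view $R$ as the cyclic $\D$-module $\D\cdot 1$ with induced filtration $F^nR := F^n\D\cdot 1$. Since $\partial^t\cdot 1=0$ for all $t\geq 1$, only the generators with $t=0$ (that is, multiplications by variables) contribute nontrivially to $\D\cdot 1$, so $F^nR$ is the $K$-span of face monomials of total degree at most $n$. Counting these gives $\dim_K F^nR=O(n^{\dim R})$, matching the expected holonomic dimension in the sense of \autoref{dfn:holonomicity}.

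For part (b), the first step is to exhibit $R_f$ as a cyclic $\D$-module generated by a single element $1/f^N$ for some $N$. Over a polynomial ring this is the standard argument using $\partial_i(f^{-m})=-m(\partial_if)f^{-m-1}$ to descend the exponent; in the $T$-space setting one must verify that the pared-down operators $x\partial^t$ still reach every $x^{\alpha}/f^m$ from $1/f^N$. Once cyclicity is in hand, I would transport the filtration by setting $F^nR_f := F^n\D\cdot (1/f^N)$ and bound $\dim_K F^nR_f$ by a polynomial of degree $\dim R$ in $n$, simultaneously tracking the denominator exponent and the admissible numerators produced by words of length $n$ in the generators.

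The main obstacle is precisely this polynomial bound on $F^nR_f$. Differentiating $1/f^m$ raises the denominator exponent, and one must offset this against the structural restrictions on the numerator imposed by the explicit form of $\partial^t$. A Bernstein-style counting should succeed, but it has to be carried out with the restricted palette $\{x\partial^t\}$ rather than with the full Weyl algebra. A practical fallback is to first reduce to the case where $f$ is a product of variables; since monomial supports control elements in $R=K[\Delta]$, the general case should reduce to this restricted setting, where the explicit form of $\D$ makes the counting direct.
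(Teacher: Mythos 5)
Your approach to part (a) is essentially the paper's. The filtration you build, $F^n\D\cdot 1$, coincides with the span of monomials in $R$ of degree at most $n$, which is exactly the graded filtration $\mathscr G_i=\oplus_{j\leq i}R_j$ used in \autoref{thm:Rholonomic}, and both count out via the Hilbert function. (One small slip: it is not true that only the $t=0$ generators act nontrivially --- $x_i\partial_i^1$ sends $x_i$ to $x_i$ --- but the degree bound survives.) Also note that \autoref{dfn:holonomicity} makes no reference to the associated graded ring, so the commutativity you invoke is not actually needed.

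For part (b) there is a genuine gap. You propose to first exhibit $R_f$ as a cyclic $\D$-module generated by $1/f^N$, then filter by $F^n\D\cdot(1/f^N)$. Cyclicity of $R_f$ is a deep fact even for polynomial rings: in characteristic $0$ it is usually established via the Bernstein--Sato functional equation, and in characteristic $p$ via Frobenius or $F$-module techniques, and in both settings the standard argument derives cyclicity \emph{from} holonomicity and the resulting finite-length property rather than the other way around. Using cyclicity to prove holonomicity therefore risks circularity, and in the Stanley--Reisner setting it is not clear that the restricted palette $\{x\partial^t\}$ even generates $R_f$ from one element. The paper sidesteps this entirely: \autoref{thm:Rfholonomic} proves the stronger claim that $M_f$ is holonomic whenever $M$ is, by directly defining $\mathscr G_i'=K\cdot\{u/f^i\mid u\in\mathscr G_{i(d+1)}\}$ with $d=\deg f$, so the growth bound $\dim_K\mathscr G_i'\leq C\bigl(i(d+1)\bigr)^r$ is immediate and no cyclic generator is needed. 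The substantive work --- verifying $\mathscr F_j\mathscr G_i'\subset\mathscr G_{i+j}'$, so that $\mathscr G'$ really is a $K$-filtration --- is precisely the degree-counting you flag as your ``main obstacle'' and leave unresolved; the paper carries it out with the quotient-rule identity of \autoref{prop:Daction} and an induction on $t$ in $x\partial^t$. Finally, your fallback of reducing to $f$ a product of variables does not apply: the \v Cech complex for an arbitrary ideal $I$ localizes at arbitrary elements of $R$, not just monomials.
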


\subsection{Notation, conventions, outline}

When the context is clear $\D=D(R;K)$ will denote the ring differential operators over $R$.  Denote the polynomial ring in $n$ variables over $K$ by $S=K[x_1,\dots,x_n]$.  $K[\Delta]=S/I_{\Delta}$ will denote a Stanley-Reisner ring with associated simplicial complex $\Delta$ on vertices labelled by the variables $x_1,\dots,x_n$.  For the sake of readibility, we may suppress indices in the indeterminates, for example writing $x^a$ to denote the arbitrary variable $x_i$ raised to the $a_i$th power.  We use multi-index notation $\SRgen{a}{t}=x_1^{a_1}\dots x_n^{a_n}\partial_1^{t_1}\dots \partial_n^{t_n}$ where $\partial_i^{t_i}=\frac{1}{t_i!}\frac{\partial_i^{t_i}}{\partial x_i^{t_i}}$ and again we may suppress indices when context is clear, writing $x\partial^t=x_i\partial^{t_i}_i$.  

The rest of the paper is structured as follows. \autoref{sec:preliminaries} consists entirely of preliminaries about local cohomology, $\D$-modules, and Stanley-Reisner rings. In \autoref{sec:result} we describe the $\D$-module structure of a Stanley-Reisner ring $R$ whose associated simplicial complex is a $T$-space, then we prove that for any $f\in R$, $R_f$ is holonomic. 

\section{Preliminaries}
\label{sec:preliminaries}

In this section, with the exception of \autoref{thm:primesFacets} we let $K$ denote a domain.

\subsection{Stanley-Reisner rings}
\label{sec:Stanley-Reisner}

Details surrounding many of the statements in this section can be found in Chapter 5 of \cite{bruns+herzog}.  Let $S=K[x_1,\dots,x_n]$ denote a polynomial ring over a domain $K$, and let $\Delta$ denote a simplicial complex on the vertex set, $V$, labeled by the indeterminates $x_1,\dots,x_n$; that is, $\Delta$ is a subset of the power set $2^V$, and if $F\in\Delta$ then so is every subset of $F$ (such is referred to as a \emph{slack} simplicial complex in \cite{brumatti+simis}).  The \emph{Stanley-Reisner ideal}, or \emph{face ideal}, of $\Delta$ over $K$ is the ideal in $S$ given by 
\[
I_\Delta := (x_{i_1}\cdots x_{i_s} \mid \{x_{i_1},\ldots,x_{i_s}\}\notin \Delta).
\]
The \emph{Stanley-Reisner ring} of $\Delta$ over $K$ is the quotient ring $K[\Delta]:=S/I_\Delta$.  Without loss of generality we shall assert all singletons $\{x_i\}$ belong to $\Delta$, or else omit the missing, or \emph{slack}, vertices and work over the resulting subcomplex which we denote by $\tilde{\Delta}$.  If $x_{i_1},\dots,x_{i_s}$ are the slack vertices we have       
\[
I_{\Delta}=(x_{i_1},\dots x_{i_s})+I_{\tilde{\Delta}}S.
\]

Sets in $\Delta$ are called \emph{faces}.  Maximal faces with respect to inclusion in $\Delta$ are called \emph{facets}.  Each face $F=\{x_{i_1},\dots,x_{i_s}\}$ corresponds to a monomial $\mu=x_{i_1}\cdots x_{i_s}\in S$ and we write $F=F_{\mu}$.  Likewise, every square-free monic monomial $\mu\notin I_{\Delta}$ corresponds to a face $F_{\mu}\in\Delta$.  Given a vector $\vect t=(t_1,\dots,t_n)\in\N^n$, we say the \emph{support} of $\vect t$ is the set 
\[
\supp(\vect t):=\{x_{i_1},\dots,x_{i_s}\mid t_j\neq 0 \text{ if and only if } j=i_1,\dots,i_s\}.
\]

\begin{thm}[cf. \cite{bruns+herzog}*{Theorem 5.1.4}]
\label{thm:primesFacets}
Suppose $R=K[\Delta]$ is a Stanley-Reisner ring over a field $K$.  Then the minimal primes of the face ideal $I_{\Delta}$ are in bijection with the facets of $\Delta$.  That is, if $P=(x_{i_1},\dots,x_{i_s})$ is a minimal prime of $I_{\Delta}$ then the corresponding facet $F=F_P$ is given by the complementary vertices $F=\{x_j\mid j \neq i_1,\dots,i_s\}$; and if $F=\{x_{i_1},\dots, x_{i_s}\}$ is a facet in $\Delta$ then $P=P_F$ is the minimal prime given by the complementary variables $P=(x_j\mid j\neq i_1,\dots i_s)$.     
\end{thm}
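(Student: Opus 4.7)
The plan is to exploit the squarefree monomial structure of $I_\Delta$ and use the standard fact that minimal primes of a monomial ideal are themselves generated by variables. For a subset $W \subseteq V = \{x_1,\dots,x_n\}$, write $P_W := (x_j \mid x_j \notin W) \subset S$. Each such $P_W$ is prime since $S/P_W \cong K[x_j \mid x_j \in W]$ is a domain. The proposal is to establish the bijection by identifying \emph{which} $P_W$ contain $I_\Delta$, observing that the assignment $W \mapsto P_W$ reverses inclusion, and then arguing that every minimal prime of $I_\Delta$ is of the form $P_W$.

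The first main step is the containment criterion: I would show $I_\Delta \subseteq P_W$ if and only if $W \in \Delta$. A monomial generator $\mu = x_{i_1}\cdots x_{i_s}$ of $I_\Delta$ lies in $P_W$ precisely when at least one $x_{i_k} \notin W$, i.e.\ when $\{x_{i_1},\dots,x_{i_s}\} \not\subseteq W$. Thus $I_\Delta \subseteq P_W$ iff every non-face fails to sit inside $W$, which by contrapositive says every subset of $W$ is a face of $\Delta$, equivalently (since $\Delta$ is closed under taking subsets) $W \in \Delta$. Since $W_1 \subseteq W_2$ iff $P_{W_1} \supseteq P_{W_2}$, the minimal elements among $\{P_W : I_\Delta \subseteq P_W\}$ correspond exactly to the maximal elements of $\Delta$, namely the facets; this already produces the claimed complementation $P_F \leftrightarrow F$.

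The second main step — and the place I expect the only real friction — is showing that every minimal prime of $I_\Delta$ is one of the $P_W$, so that no other primes sneak in. The standard argument: let $P$ be any prime containing $I_\Delta$. For each non-face $\sigma = \{x_{j_1},\dots,x_{j_r}\}$, the product $x_{j_1}\cdots x_{j_r}$ lies in $I_\Delta \subseteq P$, so since $P$ is prime some $x_{j_k}(\sigma) \in P$; collecting these variables gives a set $W' \subseteq V$ with $P_{V \setminus W'} \subseteq P$ and $I_\Delta \subseteq P_{V \setminus W'}$. If $P$ is a \emph{minimal} prime of $I_\Delta$, then $P = P_{V \setminus W'} = P_W$ for $W = V \setminus W'$, as desired.

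Combining these steps, the minimal primes of $I_\Delta$ are exactly the ideals $P_F$ with $F \in \Delta$ a facet, and $F \mapsto P_F = (x_j \mid x_j \notin F)$ together with its inverse $P \mapsto F_P = \{x_j \mid x_j \notin P\}$ are mutually inverse, giving the stated bijection. I would close by remarking (or leaving to the reader) that this also recovers the primary decomposition $I_\Delta = \bigcap_{F \text{ facet}} P_F$, which is the form in which the result is stated in Bruns--Herzog.
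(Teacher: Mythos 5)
Your proof is correct, and the paper itself gives no argument for this statement---it is quoted directly from Bruns--Herzog (Theorem 5.1.4), whose proof proceeds by essentially the same standard route you take: identify the face-complement primes $P_W$, show $I_\Delta \subseteq P_W$ iff $W\in\Delta$, and use primality to force every minimal prime of a squarefree monomial ideal to be generated by variables. Both halves of your argument (the containment criterion and the ``no other primes sneak in'' step) are sound, so nothing further is needed.
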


A Stanley-Reisner ring $R=K[\Delta]$ is a homogeneous $K$-algebra and so has a \emph{Hilbert function} $H(R,j):=\dim_KR_j$, the length as a $K$-module of the $j$th graded component of $R$.  The Hilbert function of a Stanley-Reisner ring actually coincides with the Hilbert polynomial for $j>0$ (cf. \cite{bruns+herzog}*{Theorem 5.1.7}). 

\begin{ex}[Tripp; cf. \cite{tripp}*{Section 6}]
\label{ex:trippRing} 
Let $\Delta$ denote the complex in \autoref{fig:Tspace}, whose facets are given by $\{x,y\},\{x,z\},\{y,z\},\{w\}\subset V=\{x,y,z,w\}$; i.e., the union of $w$ with the boundary of the $2$-simplex spanned by $x,y,z$. 
\begin{centering}
\begin{figure}
\includegraphics[scale=0.2]{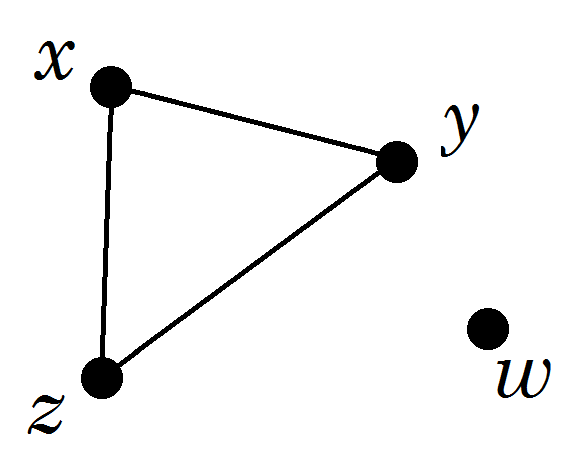}
\caption{$T$-space in \autoref{ex:trippRing}.}
\label{fig:Tspace}
\end{figure}
\end{centering}
Then we have 
\[
\Delta=\{\{x,y\},\{x,z\},\{y,z\},\{w\},\{x\},\{y\},\{z\},\emptyset\}\quad \text{and}\quad I_\Delta=(xw,yw,zw,xyz).
\]
The minimal primes for $R=K[\Delta]$ are complementary to the facets: 
\[
I_{\Delta}=(x,w)\cap(y,w)\cap(z,w)\cap(x,y,z).
\]
\end{ex}

A face $F\in\Delta$ may be \emph{separated} from another face $G\in\Delta$ means there exists a facet containing $F$ but not containing $G$.  We remark that the containment is not necessarily strict.  A complex $\Delta$ is a \emph{$T$-space} means for every face $F\in\Delta$, if $G\in \Delta$ is not contained in $F$, then $F$ can be separated from $G$.  In \autoref{ex:trippRing}, one can directly check that $\Delta$ is a $T$-space. 

\begin{prop}[cf. \cite{tripp}*{5.3 Lemma [sic]}]
\label{prop:Tspace}
Suppose $\Delta$ is a simplicial complex, and its face ideal $I_{\Delta}\subset S$ is non-trivial.  Then $\Delta$ is a $T$-space if and only if $F$ may be separated from $\{v\}$ for all faces $F\in\Delta$ and vertices $v\notin F$.  
\end{prop}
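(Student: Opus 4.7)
The plan is to prove both implications by direct unpacking of the definition of separation, exploiting the downward closure of $\Delta$ under subsets. The forward direction is essentially immediate: given a face $F$ and a vertex $v\notin F$, the singleton $\{v\}$ is itself a face of $\Delta$ (all singletons are faces, since slack vertices have been excluded), and $\{v\}\not\subseteq F$. Feeding $(F,\{v\})$ into the $T$-space property as the pair of faces with the second not contained in the first produces a facet $H\supseteq F$ with $\{v\}\not\subseteq H$, i.e.\ $v\notin H$, which is exactly the claim that $F$ can be separated from $\{v\}$.

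For the converse, assume separation from every singleton not in $F$ is possible. Given faces $F,G\in\Delta$ with $G\not\subseteq F$, I would select a witness $v\in G\setminus F$ (which exists by the very assumption $G\not\subseteq F$). By hypothesis there is a facet $H$ with $F\subseteq H$ and $v\notin H$; but since $v\in G$, this automatically gives $G\not\subseteq H$. Thus $H$ separates $F$ from $G$, establishing the $T$-space property for the pair $(F,G)$. The non-triviality of $I_\Delta$ enters only implicitly: it excludes $\Delta=2^V$, in which case the unique facet $V$ contains every vertex and no separation would ever be possible.

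There is no real obstacle; the proof is essentially a matter of choosing the separating vertex $v$ from the correct side, namely from $G\setminus F$ rather than from $F\setminus G$, so that membership $v\in G$ upgrades the singleton separation $\{v\}\not\subseteq H$ to the desired $G\not\subseteq H$. No combinatorial machinery beyond the axioms of a simplicial complex is required.
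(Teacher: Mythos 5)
Your proof is correct, and it is essentially the only natural argument. The paper itself provides no proof for this proposition -- it merely cites Tripp's Lemma 5.3 -- so there is no in-paper proof to compare against, but your two directions (specialize $G=\{v\}$ for the forward implication; pick a witness $v\in G\setminus F$ and upgrade $v\notin H$ to $G\not\subseteq H$ for the converse) are exactly what the statement demands.

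One small caveat worth flagging: your closing remark about the role of the non-triviality hypothesis is only accurate if one uses the definition of $T$-space given in the paper's introduction, which declares a complex with trivial face ideal to be a $T$-space by fiat. Under the working definition in \autoref{sec:Stanley-Reisner} (which has no such clause), $\Delta=2^V$ fails the $T$-space condition as well as the singleton-separation condition, so the biconditional would hold vacuously and the hypothesis would be harmless rather than necessary. The hypothesis is really there to reconcile the introduction's ``trivial $\Rightarrow$ $T$-space'' convention with the equivalence being asserted; this does not affect the validity of your argument.
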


\begin{ex}[cf. \cite{tripp}*{Section 6}]
\label{ex:graph}
The complex in \autoref{fig:notTspace}, given by facets $\{x,y\}$ and $\{z,w\}$, is not a $T$-space.  The reason is that $\{x\}$ and $\{y\}$ cannot be separated from one another and nor can $\{z\}$ and $\{w\}$.  

\begin{centering}
\begin{figure}
\includegraphics[scale=0.2]{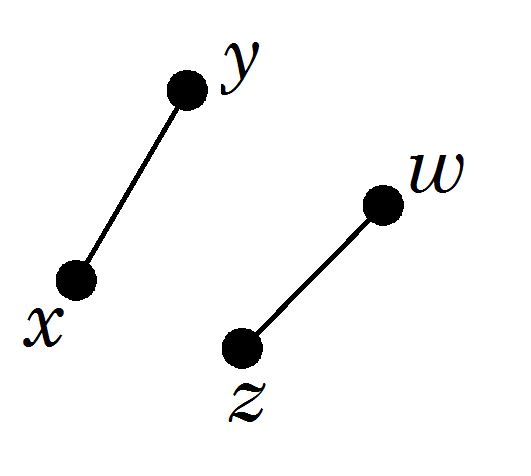}
\caption{This graph (see \autoref{ex:graph}) is not a $T$-space.} 
\label{fig:notTspace}
\end{figure}
\end{centering}

In fact, if $\Delta$ is a graph (i.e., no facets have dimension more than $1$) then it is a $T$-space if and only if none of its vertices have degree $1$.  To see why, first suppose $\Delta$ has a vertex $v$ of degree $1$.  Then the unique facet $H$ containing $v$ is an edge.  If $w$ denotes the vertex adjacent to $v$ then $\{v\}\cap\{w\}=\emptyset$ but $w\in H$.  Thus $\{v\}$ cannot be separated from $\{w\}$.  On the other hand, suppose $v\in \Delta$ and no vertices in $\Delta$ have degree $1$.  We may assert no vertex has degree $0$, either, since such a vertex can be separated from any other.  Thus all facets in $\Delta$ are edges, while the degree condition ensures every vertex is contained in at least two facets.  The only way two vertices will share more than one edge, meaning they are contained in at least two common facets, is if $\Delta$ is a multigraph.  However, that cannot happen since $\Delta$ is a simplicial complex.  
\end{ex}

\begin{ex}
\label{ex:link}
The \emph{link} of a face $F\in\Delta$ is the set 
\[
\lk_{\Delta}(F):=\left\{G\in\Delta\mid F\cup G\in\Delta, F\cap G=\emptyset \right\}.
\]
Using \autoref{prop:Tspace}, it is clear that if $\Delta$ is a $T$-space then so is $\lk_{\Delta}F$ for any face $F\in\Delta$.
\end{ex}

\subsection{Local cohomology and $\D$-modules}

%
Let $I$ be an ideal of a Noetherian ring $R$ and let $M$ be an $R$-module, which may or may not be Noetherian.  The \emph{$j$th local cohomology module over $M$ with support in $I$} is defined as the following direct limit of $\Ext$ modules, 
\[
H^j_I(M):=\varinjlim_t\,\Ext^j_R(R/I^t,M).
\]
It is the right derived functor of $H^0_I(?)$, where $H^0_I(M):=\cup_t\ann_MI^t = \{u\in M\mid uI^t=0\} = \varinjlim_t\Hom_R(R/I^t,M)$, the global sections of the sheaf $\tilde M$ with support on the closed subscheme $\Spec{R/I}\subset \Spec R$.  

Suppose $f_1,\dots,f_s\in R$ and $\rad\,I=\rad(f_1,\dots,f_s)R$.  In practice, we compute local cohomology by using the direct limit of the Kozsul complexes $\mathcal K^i(f_1^t,\dots,f_s^t;M)$, which is the \u Cech complex:
\begin{equation}
\label{eqn:cechComplex}
0\to M\to \bigoplus_iM_{f_i}\to \bigoplus_{i<j}M_{f_if_j}\to \cdots \to M_{f_1\cdots f_s}\to 0.
\end{equation}
The local cohomology modules are the cohomology modules of the \u Cech complex.  

\subsubsection{The Ring of Differential Operators}
\label{sec:diffOps}

One can read more about what follows in any of \cites{lyubeznik00_1,traves,24hours}.  Let $K$ denote a domain and let $R$ denote a $K$-algebra of finite type.  The \emph{$K$-linear differential operators on $R$ of order $0$} are the multiplication maps $R\xrightarrow{\cdot r} R$ for $r\in R$.  For a positive integer $t$, a \emph{$K$-linear differential operator on $R$ of order $\leq t$} is a $K$-linear map $\delta:R\to R$ such that for any $r\in R$, as an operator of order $0$, the commutator $\left[\delta,r\right]=\delta r-r\delta \left(=\delta\circ r-r\circ\delta\right)$ is a $K$-linear operator of order $\leq t-1$.  For example, the module $\Der_K(R,R)$ of $K$-derivations on $R$ is $D^1(R;K)$, also the $R$-dual of the module $\Omega_{R/K}$ of K\"ahler differentials.

The union of $K$-linear differential operators of all orders is known as the \emph{ring of differential operators} on $R$, $D(R;K)$, which we denote $\D$, or $\D_R$, if more clarity is necessary.  The multiplication operation in $\D$ is composition.  The $R$-modules $D^t(R;K)$, given by the operators of order $\leq t$, give $\D$ a graded structure; identifying elements in $R$ with themselves as operators in $D^0(R;K)$ makes $\D$ into an $R$-algebra.  

\begin{ex}
Let $K=\mathbb C$, the field of complex numbers, and let $R=\mathbb C[x]$ denote the polynomial ring in one variable.  The usual differential operator $\delta=\frac{\partial}{\partial x}$ is a $\mathbb C$-derivation on $R$, and  
\[
D(\mathbb C[x];\mathbb C)=\frac{\mathbb C\langle x,\frac{\partial}{\partial x}\rangle}{\left(\frac{\partial}{\partial x}\cdot x-x\cdot\frac{\partial}{\partial x}-1\right)\mathbb C\langle x,\frac{\partial}{\partial x}\rangle}.
\]
\end{ex}

\begin{ex} 
\label{ex:weyl}
More generally, let $R=S=K[x_1,\dots,x_n]$, the polynomial ring.  If $K$ is a field of characteristic $0$ then $\D$ is the first Weyl algebra.  If $K$ is a field of characteristic $p>0$ then $\D$ is strictly larger than the Weyl algebra, as it includes the \emph{divided powers} $\frac{1}{p!}\frac{\partial^p}{\partial x_i^p}$.  For any characteristic of $K$ and $t\geq 0$, we write $\partial_i^t$ to denote the $K[x_1,\dots,\hat{x_i},\dots,x_n]$-linear map (the hat connotes omission of the variable $x_i$)
\begin{subeqnarray*}
\frac{1}{t!}\frac{\partial^t}{\partial x_i^t}:S &\to& S \\
x_i^v &\mapsto& \binom{v}{t}x_i^{v-t}.
\end{subeqnarray*}
\end{ex}

\begin{ex}
\label{ex:idealizer} 
If $R=S/J$ for some ideal $J\subset S$ and $K$ is a field then 
\[
\D_R\cong\frac{\left\{ \delta\in \D_S \mid \delta (J)\subset J\right\}}{J\D_S}.
\]
The set $\mathscr I(J):=\left\{\delta\in \D_S \mid \delta(J) \subset J\right\}$ is called the \emph{idealizer} of $J$. 
\end{ex}

Note, $\D$ is generally non-commutative.  We say ``$\D$-module" to mean a left $\D$-module.  

\subsubsection{Localization}
\label{sec:localization}

Let $K$ denote a domain and $R$ a $K$-algebra of finite type.  Let $C(R)$ denote the smallest subcategory of $\D$-modules containing all modules of the form $R_f$, with $f\in R$, that is closed under formation of subobjects, quotient objects, and extensions.  Let $C_0(R)\subset C(R)$ denote the full subcategory whose objects are the modules $R_f$, for any $f\in R$.  For integers $s>0$, let $C_s(R)$ denote the subobjects, quotient objects, and extensions of the objects in $C_{s-1}(R)$.  Then $C(R)$ is the directed union of the subcategories $C_s(R)$.  The following is in \cite{lyubeznik00_1}, but in the case where $R$ is regular.     

\begin{prop}[cf. \cite{lyubeznik00_1}*{Lemma 5}] Suppose $M\in C(R)$.  Then:
\begin{itemize}[leftmargin=20pt]
\item[(a) ] $M_f\in C(R)$, for all $f\in R$.
\item[(b) ] $H_I^j(M)\in C(R)$, for any ideal $I\subset R$ and $j\geq 0$.  
\end{itemize}
\end{prop}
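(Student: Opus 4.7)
The overall plan is to mirror Lyubeznik's argument \cite{lyubeznik00_1}*{Lemma 5}, which was stated for $R$ regular, and observe that it depends only on the formal closure properties of $C(R)$ together with the exactness of localization, and so extends with no regularity assumption on $R$. Both parts will be proved by induction on $s$, using that $C(R)$ is the directed union of the subcategories $C_s(R)$.

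For part (a), fix $f\in R$ and induct on $s$, where $M\in C_s(R)$. The base case $s=0$ is immediate: if $M=R_g$ then $M_f=R_{fg}\in C_0(R)\subset C(R)$. For the inductive step, $M\in C_s(R)$ is obtained from objects of $C_{s-1}(R)$ by taking a subobject, a quotient, or an extension. Since $(-)_f$ is an exact functor that carries $\D$-modules to $\D$-modules (the operators on $R$ extend to $R_f$ via the quotient rule), each of these three constructions commutes with localization at $f$. Applying the inductive hypothesis to the relevant building blocks in $C_{s-1}(R)$ and invoking the closure axioms of $C(R)$ then gives $M_f\in C(R)$.

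For part (b), I would use the \u Cech complex description in \eqref{eqn:cechComplex}. Fix generators $f_1,\dots,f_r$ of $\rad I$; then each term of the \u Cech complex for $M$ is a finite direct sum of modules of the form $M_{f_{i_1}\cdots f_{i_k}}$. By part (a) each such localization lies in $C(R)$, and any finite direct sum is realized as an iterated extension, so every term of the complex lies in $C(R)$. Hence $H^j_I(M)$, the $j$-th cohomology of this complex, is the quotient of $\ker d^j$ (a $\D$-submodule of a term of the complex) by the image of $d^{j-1}$ (a $\D$-submodule of $\ker d^j$), and so lies in $C(R)$ by closure under subobjects and quotients.

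The main technical point, and essentially the only place where one has to be alert, is to verify that the \u Cech differentials, as well as the maps producing the subobject/quotient relations in the inductive step of (a), are $\D$-linear, so that every intermediate kernel, image, and quotient really lives in the category of $\D$-modules. This reduces to the canonical $\D_R$-compatibility of the localization map $R\to R_f$, which is precisely why Lyubeznik's regularity assumption is not required for the statement to go through.
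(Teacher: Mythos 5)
Your proof is correct and follows essentially the same route as the paper's: induction on the level $s$ for part (a), and the \u Cech complex description for part (b). You are slightly more explicit than the paper in spelling out that finite direct sums are iterated extensions and that $H^j_I(M)$ arises as a quotient of a subobject of a term in the complex, and you bypass the long exact sequence the paper mentions (but does not ultimately need); these are cosmetic improvements to the same argument.
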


\begin{proof}
There exists some $s$ such that $M\in C_s(R)$.  If $s=0$, then there exists $g\in R$ where $M=R_g$ and so $M_f=R_{gf}\in C_0(R)$.  If $s>0$ then $M$ is a subobject, quotient object, or extension of a module or modules in $C_{s-1}(R)$. In the case $M$ is a subobject or quotient object of a module $M'\in C_{s-1}(R)$, $M_f$ is respectively a subobject or quotient object of $M'_f$.  By induction on $s$, $M'_f\in C_{s-1}(R)$, so $M_f\in C_s(R)$.  In the case where $M$ is an extension of objects $M',M''\in C_{s-1}(R)$, $M_f$ is an extension of $M'_f$ and $M''_f$, which again are in $C_{s-1}(R)$ by induction, and so $M_f\in C_s(R)$, thereby proving (a). 

To prove (b) it suffices to show we have either $H^j_I(M)\in C(R)$, or else the kernel, image, or cokernel of a map from the long exact sequence of $\D$-modules 
\begin{subeqnarray*}
0\to H^0_{I'}(M)\to H^0_{I''}(M)\to H^0_I(M) 
\to H^1_{I'}(M)\to H^1_{I''}(M)\to H^1_I(M)
\to \cdots
\end{subeqnarray*}
where $I''\subset I'$ are ideals in $R$, is in $C(R)$.  

By construction $C(R)$ is closed under formation of subobjects, quotient objects, and extensions so in fact it suffices to only prove the statement for $H_I^j(M)$.  Write $I=(f_1,\dots,f_s)$; the local cohomology modules $H_I^j(M)$ are the cohomology modules of the \u Cech complex \autoref{eqn:cechComplex}, whose terms are, by hypothesis, in $C(R)$.  Therefore so are the modules $H^j_I(M)$.
\end{proof}

\begin{cor}[cf. \cite{lyubeznik00_1}*{Corollaries 6-7}]
\label{cor:finiteLengthObjects}
Suppose $R_f$ has finite length in the category of $\D$-modules, for all $f\in R$, and suppose $M\in C(R)$.  Then:
\begin{itemize}[leftmargin=20pt]
\item[(a) ] $M$ has finite $\D$-length.
\item[(b) ] $M$ has finitely many associated primes.
\end{itemize}
\end{cor}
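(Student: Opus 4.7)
The plan is to prove (a) first and then feed it into a standard filtration argument to obtain (b). For (a), I would induct on the smallest $s \geq 0$ such that $M \in C_s(R)$. The base case $s=0$ is immediate from the hypothesis, since then $M \cong R_f$ for some $f\in R$ has finite $\D$-length. For $s \geq 1$, the module $M$ arises as a subobject, quotient object, or extension of objects in $C_{s-1}(R)$; in any abelian category, each of these three operations preserves finite composition length (subobjects and quotients inherit a composition series by intersecting or projecting the given one, extensions by concatenation). So finite $\D$-length propagates out of $C_0(R)$ to all of $C(R)$.

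For (b), I would fix a $\D$-composition series
$$
0 = M_0 \;\subsetneq\; M_1 \;\subsetneq\; \cdots \;\subsetneq\; M_l = M
$$
supplied by (a), with each $N_i := M_i/M_{i-1}$ a simple $\D$-module. Since every $N_i$ is also an $R$-module and the $M_i$ form an ascending chain of $R$-submodules, the standard inclusion
$$
\Ass_R(M) \;\subset\; \bigcup_{i=1}^{l}\Ass_R(N_i)
$$
reduces the task to bounding each $|\Ass_R(N_i)|$.

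The heart of (b) is then the claim that any simple $\D$-module $N$ has at most one $R$-associated prime. Suppose $\mathfrak p\in \Ass_R(N)$, witnessed by some $x\in N$ with $\ann_R(x) = \mathfrak p$. The $\mathfrak p$-power torsion submodule $H^0_{\mathfrak p}(N) = \bigcup_{t}\{u\in N : \mathfrak p^t u=0\}$ is nonzero and is in fact a $\D$-submodule of $N$: a formal commutator calculation shows that an operator of order $k$ carries elements annihilated by $\mathfrak p^t$ to elements annihilated by $\mathfrak p^{t+k}$. Simplicity then forces $H^0_{\mathfrak p}(N) = N$, so every element of $N$ is killed by some power of $\mathfrak p$. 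If $\mathfrak q$ were a second associated prime realized by $y\in N$ with $\ann_R(y)=\mathfrak q$, then $\mathfrak p^n y = 0$ for some $n$ gives $\mathfrak p^n\subset\mathfrak q$, hence $\mathfrak p\subset\mathfrak q$; symmetry yields $\mathfrak p=\mathfrak q$. Thus $|\Ass_R(N_i)|\leq 1$, and (b) follows with the explicit bound $|\Ass_R(M)|\leq l$.

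The main obstacle I anticipate is verifying the $\D$-stability of $H^0_{\mathfrak p}(N)$; this is a standard feature of $\D$-modules but deserves explicit care in the Stanley-Reisner setting, where $R$ is not regular and the ring $\D$ has less familiar generators (cf.\ \autoref{sec:diffOps}). However, because the commutator identity $[\delta, r^n] \in \sum_{j\geq 1} r^{n-j}\D$ for $\delta$ of order $k$ and $r\in R$ is a purely formal consequence of the order filtration, the required $\D$-stability goes through, making this obstacle technical rather than substantive.
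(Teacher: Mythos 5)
Your part (a) is the same induction as the paper's, and it is correct. For part (b), you take a genuinely different (though closely related) route. The paper does not pass to a composition series; instead it builds a filtration from scratch, by repeatedly taking $M_1 = H^0_{P_1}(M)$ for $P_1$ a maximal element of $\Ass_R(M)$, then pulling back $H^0_{P_2}(M/M_1)$ to form $M_2$, and so on, observing that each factor $M_i/M_{i-1}$ has exactly one associated prime and that the process must terminate because the $\D$-length of $M$ is finite. You instead fix an actual $\D$-composition series and prove the structural lemma that a simple $\D$-module $N$ has at most one associated prime: since $H^0_{\mathfrak p}(N)$ is a $\D$-submodule it must equal $N$ by simplicity, and then two associated primes $\mathfrak p,\mathfrak q$ realized by torsion elements force $\mathfrak p^n \subset \mathfrak q$ and conversely, hence $\mathfrak p = \mathfrak q$. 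Both arguments hinge on the same two inputs --- finite $\D$-length from (a), and the fact that $H^0_I(-)$ preserves $\D$-module structure (which the paper also invokes without proof) --- but yours isolates the reusable fact about simple $\D$-modules, while the paper's avoids appealing to simple objects at all and produces its filtration directly. Your explicit bound $|\Ass_R(M)| \le \ell$ (the $\D$-length) is the same as the one implicit in the paper's construction. One small caution on your commutator remark: the precise bookkeeping statement you want is that for $\delta$ of order $\le k$ and $a_1,\dots,a_m \in \mathfrak p$ with $m \ge t+k$, one has $a_1\cdots a_m\,\delta\,u = 0$ whenever $\mathfrak p^t u = 0$; this is proved by double induction on $k$ and $t$ using $a_m\delta = \delta a_m - [\delta,a_m]$, and your phrasing ``carries elements annihilated by $\mathfrak p^t$ to elements annihilated by $\mathfrak p^{t+k}$'' is the right conclusion, just be careful that the single-variable commutator identity alone doesn't immediately give it without this induction.
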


\begin{proof}
By construction of $C(R)$ there exists $s$ such that $M\in C_s(R)$. We use induction to prove (a); if $s=0$, then $M=R_f$ and the hypothesis is the conclusion.  If $s>0$ then there are two situations to check.  In the case $M$ is a subobject or quotient of an object $M'\in C_{s-1}(R)$, its length is clearly bounded by the length of $M'$, and the length of $M'$ is finite by the inductive hypothesis.  The other situation is where $M$ is an extension of two objects $M',M''\in C_{s-1}(R)$, in which case its length is bounded by the sum of the, again, finite, lengths of $M'$ and $M''$.

To prove (b), given a filtration $[0=M_0\subset M_1\subset\cdots\subset M_i\subset\cdots\subset M]$ we have the containment of sets of associated primes
\begin{equation}
\label{eq:assM}
\Ass_R{M}\subset\bigcup_j \Ass_R{(M_j/M_{j-1})}.
\end{equation}
We shall construct a finite filtration where each of the factors $M_j/M_{j-1}$ has finitely many associated primes.  Let $P_1$ denote a maximal element in $\Ass{M}$.  Then $H^0_{P_1}(M)$ is a non-zero $\D$-submodule of $M$ with exactly one associated prime, $P_1$.  Set $M_1=H^0_{P_1}(M)$ and let $P_2$ be a maximal element in $\Ass{(M/M_1)}$.  Then $H^0_{P_2}{(M/M_1)}$ is non-zero and has exactly one associated prime, $P_2$.  Let $M_2$ denote the preimage of $H^0_{P_1}{(M/M_1)}$ in $M$.  We continue this recursive process to construct a filtration that must stabilize, since by part (a) $M$ has finite length as a $\D$-module.  Then \autoref{eq:assM} bounds the number of elements in $\Ass{M}$.
\end{proof}

\section{The Main Result}
\label{sec:result}

Unless otherwise specified, $R=K[\Delta]=S/I_{\Delta}$ is a Stanley-Reisner ring over a field $K$ and $S=K[x_1,\dots,x_n]$, as usual, is the polynomial ring in $n$ variables over $K$.  We let $\D_S=D(S;K)$ and $\D=\D_R=D(R;K)$.  As a result of the isomorphism in \autoref{ex:idealizer} we refer to a monomial $\vect x^{\vect a}\underline{\partial}^{\vect t}\in \D_S$ as ``in" $\D_R$ if its image in the idealizer of $I_{\Delta}$ is non-zero.   

The heart of our paper will be in proving $R_f$ has finite length for any $f\in R$.  Historically this was shown for $S$ using the notion of holonomicity, by J. Bernstein \cite{bernstein+gelfand+gelfand} in characteristic $0$, and by R. B\o gvad \cite{bogvad} in characteristic $p>0$.  Bavula \cite{bavula} gives a characteristic-free definition of holonomicity which Lyubeznik \cite{lyubeznik11} has further simplified.  In \autoref{sec:holonomicity} we show how Lyubeznik's definition applies to Stanley-Reisner rings.  We show that $R_f$ is holonomic under this definition, and has finite length.  It will then follow from \autoref{cor:finiteLengthObjects} that $|\Ass_R(H_I^jR)|<\infty$.

\subsection{The \texorpdfstring{$\D$}{D}-module structure of a $T$-space}
\label{sec:DstructureForSR}

In \cite{traves} W. Traves showed that a monomial $\vect x^{\vect a}\underline{\partial}^{\vect t}\in \D_S$ is in $\D_R$ if and only if for each minimal prime $P$ of $I_{\Delta}$, either $\vect x^{\vect a}\in P$ or $\vect x^{\vect t}\notin P$.  The $T$-space condition adds enough information to completely characterize $\D_R$.  
  
\begin{thm}
\label{thm:xdelx}
Suppose $R=K[\Delta]$ is a $T$-space.  Then $\D$ is generated as an $R$-algebra by operators of the form $x\partial^t$, where $t\in \N$.  In other words,
\[
\D=R\left\langle x_i\partial_i^{t_i}\mid 1\le i\le n, t_i\ge 0\right\rangle.
\]
\end{thm}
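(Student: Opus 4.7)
The plan is to combine Traves' characterization of which monomials $\vect{x}^{\vect{a}}\underline{\partial}^{\vect{t}}$ lie in $\D_R$ with the $T$-space hypothesis to obtain an explicit factorization
\[
\vect{x}^{\vect{a}}\underline{\partial}^{\vect{t}} \;=\; r \cdot \prod_{i \in T}\bigl(x_i\partial_i^{t_i}\bigr),
\]
where $T = \supp(\vect{t})$ and $r \in R$. Since Traves establishes that such monomials span $\D_R$ over $K$, any such decomposition immediately exhibits $\D$ as the $R$-algebra generated by the operators $x_i\partial_i^{t_i}$.

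First I would reduce to the case where $A := \supp(\vect{a})$ is itself a face of $\Delta$. Via the isomorphism $\D_R \cong \mathscr{I}(I_\Delta)/I_\Delta\D_S$ of \autoref{ex:idealizer}, a monomial $\vect{x}^{\vect{a}}\underline{\partial}^{\vect{t}}$ with $\vect{x}^{\vect{a}} \in I_\Delta$ lies in $I_\Delta\D_S$ and hence vanishes in $\D_R$; so the only nontrivial case is $A \in \Delta$.

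The heart of the argument, where the $T$-space hypothesis enters, is the containment $T \subseteq A$. By Traves' criterion, every facet $F$ of $\Delta$ must satisfy either $A \not\subseteq F$ or $T \subseteq F$. Suppose for contradiction that some vertex $v \in T \setminus A$ exists; since $A$ is a face and $v \notin A$, \autoref{prop:Tspace} supplies a facet $H$ with $A \subseteq H$ and $v \notin H$. But then $A \subseteq H$ while $T \not\subseteq H$, contradicting Traves' criterion. Hence $T \subseteq A$, i.e., $a_i \geq 1$ for every $i \in T$. I expect this implication to be the main obstacle in writing the proof cleanly — it is the one place where both Traves' description of $\D_R$ and the $T$-space property are doing nontrivial work; the remaining steps are bookkeeping.

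The final step is the factorization itself. Because the operators $x_i^{a_i}\partial_i^{t_i}$ and $x_j^{a_j}\partial_j^{t_j}$ commute for $i \neq j$, one may rewrite $\vect{x}^{\vect{a}}\underline{\partial}^{\vect{t}} = \prod_i x_i^{a_i}\partial_i^{t_i}$. For each $i \in T$ the bound $a_i \geq 1$ permits the split $x_i^{a_i}\partial_i^{t_i} = x_i^{a_i-1}\cdot(x_i\partial_i^{t_i})$, pulling out a generator of the claimed form; for $i \notin T$ the factor $x_i^{a_i}$ already lies in $R$. Collecting the pure-multiplication pieces yields
\[
\vect{x}^{\vect{a}}\underline{\partial}^{\vect{t}} \;=\; \vect{x}^{\vect{a}-\mathbf{e}_T}\cdot\prod_{i \in T}\bigl(x_i\partial_i^{t_i}\bigr),
\]
where $\mathbf{e}_T$ is the characteristic vector of $T$. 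The scalar $\vect{x}^{\vect{a}-\mathbf{e}_T}$ is supported on a subset of the face $A$ and hence belongs to $R$, completing the proof.
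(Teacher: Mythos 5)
Your proof is correct and follows essentially the same route as the paper's: reduce to the case $\supp(\vect a)\in\Delta$, translate Traves' criterion into the facet language via \autoref{thm:primesFacets}, invoke the $T$-space property to force $\supp(\vect t)\subseteq\supp(\vect a)$, and then factor. The only cosmetic difference is that you phrase the key step as a contradiction using \autoref{prop:Tspace} (separation from a single vertex $v\in T\setminus A$), whereas the paper argues directly that $F$ cannot be separated from $G=\supp(\vect t)$ and applies the $T$-space definition to conclude $G\subseteq F$; these are contrapositives of the same argument.
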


\begin{proof}
By \cite{traves}*{Theorem 3.5}, $\D$ is generated as a $K$-module by operators $\SRgen{a}{t}$ such that for every minimal prime $P$ of $R$, either $\vect x^{\vect a}\in P$ or $\vect x^{\vect t}\not\in P$.  Elements $x\partial^t$ satisfy this condition tautologically.  For the other inclusion, suppose $\SRgen{a}{t}\in\D$.  Put $F=\supp(\vect a)$ and $G=\supp(\vect t)$.  We assert $F\in\Delta$, for otherwise $\vect x^{\vect a}\in I_{\Delta}$.  Choose a facet $H$ containing $F$ and let $P_H$ denote its corresponding minimal prime.  Then $\vect x^{\vect a}\notin P_H$ by \autoref{thm:primesFacets}, and hence, by \cite{traves}*{Theorem 3.5}, $\vect x^{\vect t}\notin P_H$.  It follows that $G\subseteq H$, and so $F$ cannot be separated from $G$.  By the $T$-space condition we must then have $G\subseteq F$, i.e., $\supp(\vect t)\subseteq \supp(\vect a)$.  Therefore, monomials of the form $x\partial^{t}$ divide $\SRgen{a}{t}$.
\end{proof}

%
%

The $R$-algebra generators of $\D$ are monomials in which for each $i=1,\dots, n$, a power of $\partial_i$ does not appear without $x_i$.  It follows from \cite{traves} that a free (left) $K$-basis for $\D$ is given by monomials of the form $\SRgen{a}{t}$, such that $\supp(\vect t)\subseteq \supp(\vect a)\in \Delta$.  

In general, if $f$ and $g$ are polynomials in the variable $x$, then the higher order product rule says
\begin{equation}
\label{eq:prodRule}
\frac{\partial^t}{\partial x^t}(fg)=\sum_{s=0}^t\binom{t}{s}\frac{\partial^s}{\partial x^s}(f)\frac{\partial^{t-s}}{\partial x^{t-s}}(g).
\end{equation}
Multiplying \autoref{eq:prodRule} on the left by $x$ and dividing by $t!$, it follows that for every $f\in R$ we have the following relation in $\D$:
\begin{equation}
\label{eq:opProdRule}
x\partial^t\cdot f = \sum_{s=0}^t\underbrace{\frac{1}{s!}\frac{\partial^s}{\partial x^s}(f)}_{\in R=D^0(R;K)}\cdot\ x\partial^{t-s}.
\end{equation}

\begin{prop}[cf. \cite{lyubeznik11}*{Corollary 2.2}]
\label{prop:xdxcomm}
Let $R=K[\Delta]$ be a $T$-space.  Then
\begin{itemize}
\item[(a) ] $x\partial^t\cdot x^u=\sum_{s=0}^t\binom{u}{s}x^{u-s+1}\partial^{t-s}$ for $u\geq 0$.  
\item[(b) ] $x_i\partial_i^{t_i}$ commutes with $x_j$ for $j\neq i$ and with $x_j\partial_j^{t_j}$ for any $j$.

\end{itemize}
\end{prop}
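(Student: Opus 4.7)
The plan is to derive both parts as direct consequences of the operator product rule in \autoref{eq:opProdRule}, together with the fact that divided-power derivatives and indeterminates attached to distinct variables commute in $\D_S$. The only preliminary observation is that by \autoref{thm:xdelx} each operator $x_i\partial_i^{t_i}$ genuinely belongs to $\D_R$ under the $T$-space hypothesis, so any identity proved in $\D_S$ descends to $\D_R$ via the idealizer identification of \autoref{ex:idealizer}.

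For part (a), I would specialize \autoref{eq:opProdRule} to $f = x^u$. The divided-power derivative evaluates to
\[
\frac{1}{s!}\frac{\partial^s}{\partial x^s}(x^u) = \binom{u}{s}\,x^{u-s},
\]
which vanishes automatically when $s > u$. Multiplying this factor on the right by $x\partial^{t-s}$ absorbs a single $x$ to produce $x^{u-s+1}\partial^{t-s}$, and summing over $s$ yields the stated identity.

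For part (b), fix $j \neq i$. The divided-power operator $\partial_i^{t_i}$ is $K[x_1,\dots,\hat{x_i},\dots,x_n]$-linear by its definition in \autoref{ex:weyl}, so it commutes with $x_j$; combined with $x_i x_j = x_j x_i$ this yields $[x_i\partial_i^{t_i},\,x_j] = 0$. The same separation-of-variables observation, applied to both factors of $x_j\partial_j^{t_j}$ and using that $\partial_i$ and $\partial_j$ commute when $i \neq j$, gives $[x_i\partial_i^{t_i},\,x_j\partial_j^{t_j}] = 0$, while the remaining case $j = i$ reduces to an operator commuting with itself. I do not anticipate any hard step: part (a) is a one-line specialization of \autoref{eq:opProdRule}, and part (b) reduces to the disjointness of the indices $i$ and $j$.
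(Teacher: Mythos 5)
Your proof is correct and follows essentially the same path as the paper: part (a) is the paper's exact one-line specialization of \autoref{eq:opProdRule} to $f=x^u$, and for part (b) you invoke the $K[x_1,\dots,\hat{x_i},\dots,x_n]$-linearity of $\partial_i^{t_i}$ together with commutativity of derivatives in distinct variables, which is the conceptual content of the paper's explicit product-rule computation (and like the paper, you correctly note the $j=i$ case is self-commutation). The framing of working first in $\D_S$ and descending to $\D_R$ via the idealizer is exactly what the paper's computation implicitly does.
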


\begin{proof}
For (a), apply \autoref{eq:opProdRule} to $f=x^u$.  To prove (b), take $f\in R$.  By applying \autoref{eq:prodRule} to $\frac{\partial^{t_i}}{\partial x_i^{t_i}}\left(x_j\cdot \frac{1}{t_j!}\frac{\partial^{t_j}}{\partial x_j^{t_j}}(f)\right)$, for $t_j\geq 0$,
\begin{subeqnarray*}
x_i\partial_i^{t_i}\cdot x_j\partial_j^{t_j}(f) &=& x_i\frac{1}{t_i!}\frac{\partial^{t_i}}{\partial x_i^{t_i}}\left(x_j\cdot \frac{1}{t_j!}\frac{\partial^{t_j}}{\partial x_j^{t_j}}(f)\right) \\
	&=& x_i\frac{1}{t_i!}\sum_{s=0}^{t_i}\binom{t_i}{s}\frac{\partial^s}{\partial x_i^s}(x_j)\frac{\partial^{t_i-s}}{\partial x_i^{t_i-s}}\left(\frac{1}{t_j!}\frac{\partial^{t_j}}{\partial x_j^{t_j}}(f)\right) \\
	&=& x_i\frac{1}{t_i!}x_j\frac{\partial^{t_i}}{\partial x_i^{t_i}}\left(\frac{1}{t_j!}\frac{\partial^{t_j}}{\partial x_j^{t_j}}(f)\right) \\
	&=& \frac{1}{t_i!t_j!}x_ix_j\frac{\partial^{t_i}\partial^{t_j}}{\partial x_i^{t_i}\partial x_j^{t_j}}(f).
\end{subeqnarray*}
Interchanging $i$ and $j$ gives the same result.  For $j\neq i$, setting $t_j=0$ shows that $x_i\partial_i^{t_i}$ commutes with $x_j$. 
\end{proof}

\begin{prop}
\label{prop:Daction}
For any $f,g\in R$, $j\geq 0$, 
\begin{equation}
\label{eq:quotientRule}
x\partial^t\left(\frac{g}{f^j}\right)
=\frac{1}{f^j}x\partial^t(g)
-\sum_{s=1}^t\frac{1}{f^j}\frac{1}{s!}\frac{\partial^sf^j}{\partial x^s}\cdot x\partial^{t-s}\left(\frac{g}{f^j}\right)
\end{equation}
with $x\partial^0=x$.
\end{prop}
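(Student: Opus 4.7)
The plan is to deduce the identity directly from the operator relation \autoref{eq:opProdRule} applied to the factorization $g = f^j\cdot(g/f^j)$ inside the localization $R_f$. Since \autoref{eq:opProdRule} was derived as an identity in $\D$ (obtained by multiplying the higher-order product rule on the left by $x$ and dividing by $t!$), it holds when either side is applied to any element of any $\D$-module. In particular it applies to elements of $R_f$, which carries a natural $\D$-module structure (as $R_f\in C(R)$ by the discussion in \autoref{sec:localization}), and it continues to hold when the ``inner'' polynomial is replaced by an arbitrary $h\in R$ — we will take $h=f^j$.

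First I would apply \autoref{eq:opProdRule}, with $f$ replaced by $f^j$, to the element $g/f^j\in R_f$. Since $f^j\cdot(g/f^j)=g$ in $R_f$, this yields
\[
x\partial^t(g)
\;=\;x\partial^t\!\cdot f^j\left(\frac{g}{f^j}\right)
\;=\;\sum_{s=0}^{t}\frac{1}{s!}\frac{\partial^s f^j}{\partial x^s}\cdot x\partial^{t-s}\!\left(\frac{g}{f^j}\right).
\]
Next I would split off the $s=0$ term, which equals $f^j\cdot x\partial^t(g/f^j)$, to obtain
\[
x\partial^t(g)
\;=\;f^j\cdot x\partial^t\!\left(\frac{g}{f^j}\right)
+\sum_{s=1}^{t}\frac{1}{s!}\frac{\partial^s f^j}{\partial x^s}\cdot x\partial^{t-s}\!\left(\frac{g}{f^j}\right).
\]
Finally, dividing both sides by $f^j\in R_f$ and rearranging isolates $x\partial^t(g/f^j)$ on the left and produces precisely the claimed formula \autoref{eq:quotientRule}. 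The convention $x\partial^0=x$ drops out of the $s=t$ summand, consistent with the $R$-multiplication piece of the operator.

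There is no real obstacle here: the argument is a single invocation of \autoref{eq:opProdRule} followed by algebraic rearrangement. The only conceptual point worth flagging is the compatibility of $x\partial^t$ with the localization map $R\to R_f$, but this is automatic because \autoref{eq:opProdRule} is an identity in the (non-commutative) ring $\D$ acting on a $\D$-module, and the $\D$-module structure on $R_f$ is the standard one.
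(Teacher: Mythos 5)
Your proposal is correct and follows essentially the same route as the paper. The paper also starts from \autoref{eq:opProdRule}, isolates the $s=0$ summand $f\cdot x\partial^t$, substitutes $f^j$ for $f$, and clears $f^j$ to solve for $x\partial^t$ before applying the result to $g/f^j$; you apply the identity to $g/f^j$ first and rearrange afterwards, but the algebra is identical.
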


\begin{proof}
$R_f$ acquires a structure of a $\D$-module as follows.  By \autoref{eq:opProdRule}, 
\begin{subeqnarray*}
x\partial^t\cdot f &=& \sum_{s=0}^t\frac{1}{s!}\frac{\partial^sf}{\partial x^s}\cdot x\partial^{t-s} \\
x\partial^t\cdot f-\sum_{s=1}^t\frac{1}{s!}\frac{\partial^sf}{\partial x^s}\cdot x\partial^{t-s} &=& \partial^0(f)\cdot x\partial^t \\
	&=& f\cdot x\partial^t. 
\end{subeqnarray*}
We transpose the equality, replace $f$ with $f^j$, then multiply on the left by $\frac{1}{f^j}$ to get
\begin{equation}
\label{eq:operator}
x\partial^t = \frac{1}{f^j}x\partial^t\cdot f^j -\sum_{s=1}^t\frac{1}{f^j}\frac{1}{s!}\frac{\partial^sf^j}{\partial x^s}\cdot x\partial^{t-s}. 
\end{equation}
Finally, apply the operator \autoref{eq:operator} to $\frac{g}{f^j}\in R_f$.
\end{proof}

\begin{rmk}
\label{rmk:Daction} 
\autoref{prop:Daction} also holds for any $\D$-module $M$, with $g\in R$ replaced by $u\in M$.  In other words, $M_f$ likewise inherits a $\D$-module structure from $M$.
\end{rmk}

\subsection{Holonomicity}
\label{sec:holonomicity}

\begin{dfn}[cf. \cites{lyubeznik11,bavula}]
\label{dfn:holonomicity}
Let $S=K[x_1,\dots,x_n]$ where $K$ is a domain.  Let $R=S/J$ denote a homomorphic image of $S$ and let $r=\dim R$.
\begin{itemize}
\item[(a) ] The \emph{Bernstein filtration} on $\D$ is given by $\mathscr F=[\mathscr F_0\subset \mathscr F_1\subset \cdots \subset \D]$ where 
\[
\mathscr{F}_j=K\cdot \{\vect x^\mathbf{a}\underline{\partial}^{\vect t}\mid a_1+\cdots +a_n+t_1+\cdots+t_n\leq j\}\cap \mathscr I(J)/J\D_S
\]
(the symbol $\mathscr I(J)$ refers to the idealizer of $J$; see \autoref{ex:idealizer}).
\item[(b) ] A \emph{$K$-filtration} $\mathscr G$ on a $\D$-module $M$ is an ascending chain of $K$-modules $\mathscr G_0\subset \mathscr G_1\subset\cdots$
such that
\begin{itemize}
\item[(i) ] $\cup_i\mathscr G_i=M$ and
\item[(ii) ] for all $i$ and $j$, $\mathscr F_j\mathscr G_i\subset \mathscr G_{i+j}$.
\end{itemize}
\item[(c) ] A $\D$-module $M$ is \emph{holonomic} means it has a $K$-filtration $\mathscr G$ compatible with $\mathscr F$ such that for all $i$, $\dim_K{\mathscr G_i}\leq Ci^r$, for some constant $C$.
\end{itemize}
\end{dfn}

In the following Lemma, we will use the notation
\[
\SRgen{}{t}=\SRgen{\sum_{x_{\it i}\in \supp(t)}\vect e_{\it i}}{t},
\]       
where $\vect e_i$ denotes the $i$th standard unit vector in $\N^n$.  

\begin{lem}
\label{lem:ddm}
Let $R=K[\Delta]$ be a $T$-space and let $\m=(x_1-c_1,\ldots,x_n-c_n)R$ be a $K$-rational maximal ideal.
\begin{itemize}
\item[(a) ] A $K$-basis for $\D/\D\m$ is given by $\{\SRgen{}{t}\mid \supp{(\vect t)}\in\Delta\}$.
\item[(b) ] For any $w\in\D/\D\m$ with $w\notin K$, there exists $f\in\m$ such that $fw=1$.
\end{itemize}
\end{lem}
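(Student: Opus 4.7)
The plan is to reduce both parts to a single lowering calculus modulo $\D\m$. The divided-power commutation $\partial_i^{t_i}(x_i-c_i)=(x_i-c_i)\partial_i^{t_i}+\partial_i^{t_i-1}$, inherited from $\D_S$, combined with the observation that any product $D\cdot(x_j-c_j)$ in $\D_R$ lies in $\D\m$, would yield a recursion expressing $(x_i-c_i)\cdot\SRgen{}{t}$ modulo $\D\m$ as a $K$-linear combination of basis candidates $\SRgen{}{t'}$ with $\vect t'$ strictly lower in the componentwise order. This one recursion drives both the spanning/independence arguments in (a) and the inversion argument in (b).

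For (a), spanning is immediate from \autoref{thm:xdelx} and Traves' $K$-basis $\{\SRgen{a}{t}:\supp(\vect t)\subseteq\supp(\vect a)\in\Delta\}$ of $\D_R$: factoring $\SRgen{a}{t}=\vect x^{\vect a-\vect a_0}\cdot\SRgen{}{t}$ with $\vect a_0=\sum_{x_i\in\supp(\vect t)}\vect e_i$, expanding $\vect x^{\vect a-\vect a_0}$ in powers of $\vect x-\vect c$, and iterating the lowering recursion reduces each class $[\SRgen{a}{t}]$ to a $K$-combination of the proposed basis. For linear independence, I would construct a separating family of functionals $\phi_{\vect t}\colon\D/\D\m\to K$ extracting the $\SRgen{}{t}$-coefficient after this canonical reduction, checking well-definedness on the quotient by direct verification on the Traves-monomial generators of $\D\m$.

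For (b), given $w=\sum_{\vect t'}\alpha_{\vect t'}[\SRgen{}{t'}]$ with some $\alpha_{\vect t^*}\ne 0$ for $\vect t^*\ne\vect 0$, I would look for $f=\sum_{\vect e}\beta_{\vect e}(\vect x-\vect c)^{\vect e}\in\m$ solving $fw\equiv 1\pmod{\D\m}$. The lowering recursion renders this as a linear system in the $\beta_{\vect e}$ that is triangular with respect to a total order refining componentwise divisibility on $\N^n$, so one solves it from the top down starting with $\vect t^*$. The main obstacle will be showing that the $T$-space hypothesis keeps the recursion inside the claimed basis — so that each $\SRgen{}{t-j\vect e_i}$ remains supported in $\Delta$ — and yields nondegenerate diagonal entries in the triangular system; this is precisely where the combinatorial separation property underpinning \autoref{thm:xdelx} is indispensable.
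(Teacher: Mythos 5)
Your lowering calculus is indeed the engine of the paper's proof of part (a): the paper derives exactly the congruence you describe by expanding $\vect x^{\vect a-\vect e_i}\SRgen{}{t}(x_i-c_i)\in\D\m$ and obtains, modulo $\D\m$, a reduction lowering the $\vect x$-exponent, with the termwise effect $(x_i-c_i)\SRgen{}{t}\equiv -\SRgen{}{t-\vect e_i}$ (for $t_i\ge 2$) or $-c_i\SRgen{}{t-\vect e_i}$ (for $t_i=1$). Spanning then follows precisely as you say, from Traves' basis and \autoref{thm:xdelx}. For independence the paper simply asserts that the reduction is unique; your ``separating functionals'' idea is a reasonable way to make that assertion precise (equivalently, one can argue that $\D$ is free as a \emph{right} $R$-module on $\{\SRgen{}{t}\mid\supp(\vect t)\in\Delta\}$, so $\D/\D\m\cong\D\otimes_R(R/\m)$ is visibly free over $K$ on those classes).

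For part (b) your proposal diverges from the paper and, as written, has a gap. The paper avoids any linear system: it observes that for $\vect e\le\vect t$ componentwise one has $(\vect x-\vect c)^{\vect e}\,\SRgen{}{t}\equiv\lambda\,\SRgen{}{t-\vect e}$ with $\lambda\in K^\times$, while $(\vect x-\vect c)^{\vect e}$ annihilates $\SRgen{}{t}$ whenever $e_i>t_i$ for some $i$. Then given $w=\sum b_j\SRgen{}{\vect t_j}$, one chooses $\vect t_l$ \emph{maximal} in the support of $w$ and takes the single monomial $g=(\vect x-\vect c)^{\vect t_l}$: maximality forces $g$ to kill every other term (each $\vect t_j\ne\vect t_l$ fails $\vect t_j\ge\vect t_l$ in some coordinate), and $gw\in K^\times$, so $f=(gw)^{-1}g\in\m$ works. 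Your formulation does not single out a \emph{maximal} $\vect t^\ast$, and without that the asserted triangularity of the system $fw\equiv 1$ in the unknowns $\beta_{\vect e}$ is not justified: a non-maximal choice of $\vect t^\ast$ produces nonzero entries on $\SRgen{}{\vect s}$ for $\vect s>\vect 0$ that force you to bring in further $\beta_{\vect e}$'s, and you would still have to check that the system is consistent and that a solution can be found with $\beta_{\vect 0}=0$ (so that $f\in\m$). Choosing $\vect t^\ast$ maximal collapses all of this to a single equation in the single unknown $\beta_{\vect t^\ast}$, which is what the paper does.

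Finally, the ``main obstacle'' you flag is misplaced. That $\SRgen{}{t-j\vect e_i}$ stays in the proposed basis is automatic: $\supp(\vect t-j\vect e_i)\subseteq\supp(\vect t)\in\Delta$ and $\Delta$ is a simplicial complex, hence closed under taking subsets; no separation property is needed for this. The $T$-space hypothesis is consumed entirely in \autoref{thm:xdelx}, where it forces $\supp(\vect t)\subseteq\supp(\vect a)$ for generators of $\D$. Once that theorem is in hand, the remainder of the lemma is purely combinatorial manipulation with divided powers and does not invoke separation again.
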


\begin{proof}
To show (a), note $\D$ is generated as a $K$-vector space by elements of the form $\vect x^{\vect a}\SRgen{}{t}$, with $\supp(\vect t)\in\Delta$. We shall determine the relations among these generators in $\D/\D\m$. Consider the element $\vect x^{\vect a-\vect e_i}\SRgen{}{t}(x_i-c_i)\in \D\m$, where $a_i\geq 1$.  Note, elements of this form generate $\D\m$ as a $K$-vector space because equivalently we could consider $\vect x^{\vect a}\SRgen{}{t}(x_i-c_i)$ with $a_i\geq 0$.  Using \autoref{prop:xdxcomm}, 
\begin{subeqnarray*}
\vect x^{\vect a-\vect e_i}\SRgen{}{t}(x_i-c_i) 
	&=& \vect x^{\vect a -\vect e_i}\SRgen{}{t} x_i
		-c_i\vect x^{\vect a-\vect e_i}\SRgen{}{t} \\
	&=& \vect x^{\vect a-\vect e_i}\SRgen{}{t-\it{t_i}\vect e_{\it i}}x_i\partial_i^{t_i} x_i
		-c_i\vect x^{\vect a-\vect e_i}\SRgen{}{t} \\
	&=& \vect x^{\vect a-\vect e_i}\SRgen{}{t-\it{t_i}\vect e_{\it i}}x_i^2\partial_i^{t_i} 
		+\vect x^{\vect a-\vect e_i}\SRgen{}{t-\it{t_i}\vect e_{\it i}}x_i\partial_i^{t_i-1} 
		-c_i\vect x^{\vect a-\vect e_i}\SRgen{}{t} \\
	&=& \begin{cases}
		\vect x^{\vect a}\SRgen{}{t} 
			+\vect x^{\vect a-\vect e_i}\SRgen{}{t-\vect e_{\it i}} 
			-c_i\vect x^{\vect a-\vect e_i}\SRgen{}{t}
			& t_i\geq 2 \\
		\vect x^{\vect a}\SRgen{}{t}
			+\vect x^{\vect a}\SRgen{}{t-e_{\it i}} 
			-c_i\vect x^{\vect a-\vect e_i}\SRgen{}{t}
			& t_i=1 \\
		\vect x^{\vect a}\SRgen{}{t}
			-c_i\vect x^{\vect a-\vect e_i}\SRgen{}{t}
			& t_i=0
		\end{cases}
\end{subeqnarray*}
giving the following congruence in $\D/\D\m$:
\begin{equation}
\label{eq:cong}
\vect x^{\vect a}\SRgen{}{t}\equiv\begin{cases}	
	c_i\vect x^{\vect a-\vect e_i}\SRgen{}{t}-\vect x^{\vect a-\vect e_i}\SRgen{}{t-e_{\it i}} & t_i\geq 2 \\
	c_i\vect x^{\vect a-\vect e_i}\SRgen{}{t}-\vect x^{\vect a}\SRgen{}{t-e_{\it i}} & t_i=1 \\
	c_i\vect x^{\vect a-\vect e_i}\SRgen{}{t} & t_i=0
\end{cases}.	 
\end{equation}
Applying \autoref{eq:cong} repeatedly allows us to uniquely reduce $\vect x^{\vect a}\SRgen{}{t}$ to a $K$-linear sum of elements in $\{\SRgen{}{t}\mid \supp{(\vect t)}\in\Delta\}$.

To prove (b), first observe applying \autoref{eq:cong} to the case of $\vect{a}=a_i\vect e_i$ while inducing on $a_i\geq 1$,
\[
(x_i-c_i)^{a_i}\SRgen{}{t}
	\equiv\begin{cases}
		-\SRgen{}{t-{\it a_i}e_{\it i}} & t_i\geq a_i+1 \\
		-x_i\SRgen{}{t-{\it a_i}e_{\it i}}
			\equiv-c_i\SRgen{}{t-{\it a_i}e_{\it i}} & t_i=a_i \\
		0 & t_i\leq a_i-1
\end{cases}
\]
shows for any $\vect a=(a_1,\dots,a_n)$ and setting $\vect c=(c_1,\dots,c_n)\in K^n$, $(\vect x-\vect c)^{\vect a}$ annihilates $\SRgen{}{\vect t}$ if $a_i>t_i$ for some $i$.  Meanwhile, $(\vect x-\vect c)^{\vect t}\SRgen{}{t}\in K^{\times}=K\setminus 0$.  

Write $w=\sum_{j=1}^sb_j\SRgen{}{\vect t_{\it j}}\in \D/\D\m$, asserting $b_j\in K^{\times}$ for all $j=1,\dots,s$ and $s\geq 1$.  Take any $l$ such that $\vect t_l$ is maximal in $\{\vect t_j\mid j=1,\dots,s\}$ under the partial order on $\mathbb{N}^n$, and let $g=(\vect x-\vect c)^{\vect t_j}$.  By our choice of $l$, for any $j\neq l$, $g$ annihilates $\SRgen{}{\vect t_{\it j}}$. Thus $gw=b_l(g\SRgen{}{\vect t_{\it l}})$, and $g\SRgen{}{\vect t_{\it l}}$ is a unit by our observations above.  Put $f=(gw)^{-1}g$ to get $fw=1$.
\end{proof}

\begin{prop}[cf. \cite{lyubeznik11}*{Corollary 2.5}]
\label{prop:ddmbasis}
Suppose $R=K[\Delta]$ is a $T$-space.  Let $\m\subset R$ denote a maximal ideal, such that $R/\m$ is separable over $K$.  Suppose $M$ is a $\D$-module containing an element $u$ such that $\m=\ann_R(u)$. Then the set $\{
\left(\SRgen{}{t}\right)u\mid \supp{(\vect t)}\in\Delta\}$ is linearly independent over $K$. 
\end{prop}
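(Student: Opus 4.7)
The plan is to realize the set $\{(\SRgen{}{t})u \mid \supp(\vect t) \in \Delta\}$ as the image of the $K$-basis of $\D/\D\m$ from \autoref{lem:ddm}(a) under the natural $\D$-module homomorphism
\[
\phi\colon \D/\D\m \longrightarrow M, \qquad [\delta] \longmapsto \delta u.
\]
This $\phi$ is well-defined because $(\D\m)u = \D(\m u) = 0$, and the claimed $K$-linear independence is precisely the injectivity of $\phi$.

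In the case where $\m$ is $K$-rational, I would deduce injectivity from simplicity of $\D/\D\m$ as a $\D$-module. For any nonzero $w \in \D/\D\m$, either $w \in K^{\times}$ (and so $\D w = \D/\D\m$ trivially) or $w \notin K$, in which case \autoref{lem:ddm}(b) supplies $f \in \m$ with $fw = 1$, so again $\D w = \D/\D\m$. Hence $\D/\D\m$ admits no proper nonzero $\D$-submodule. Since $u \neq 0$ (else $\ann_R(u) = R \neq \m$), $\phi$ is a nonzero map out of a simple $\D$-module, and is therefore injective.

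For a general maximal $\m$ with $R/\m$ separable over $K$, I would reduce to the $K$-rational case by scalar extension. Choose a finite separable extension $L/K$ containing $R/\m$. Then $R' := R \otimes_K L$ remains a Stanley-Reisner $T$-space on the same complex $\Delta$, and separability together with the Chinese Remainder Theorem gives $\m \otimes_K L = \m'_1 \cap \cdots \cap \m'_d$, an intersection of distinct $L$-rational maximal ideals of $R'$. By faithful flatness, it suffices to establish $L$-linear independence of $\{\SRgen{}{t}(u \otimes 1)\}$ in $M' := M \otimes_K L$.

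The main obstacle is this last reduction: the annihilator of $u \otimes 1$ in $R'$ is $\m \otimes_K L$ rather than a single maximal ideal, so the simple-module argument does not apply to $u \otimes 1$ directly. I would handle it by lifting the CRT idempotents in $R'/(\m \otimes_K L) \cong L^d$ to write $u \otimes 1 = u'_1 + \cdots + u'_d$ with $\ann_{R'}(u'_i) = \m'_i$, apply the simple case to each $u'_i$ individually, and combine via the comaximality $\D'\m'_i + \D'\m'_j = \D'$ for $i \neq j$ (inherited from $\m'_i + \m'_j = R'$). The combining step is the most delicate because the operators $\SRgen{}{t}$ do not commute with the idempotents, so some bookkeeping filtered by the order of the operators is likely needed to propagate independence from the individual $u'_i$'s back to $u \otimes 1$.
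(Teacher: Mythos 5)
Your treatment of the $K$-rational case is correct, and in fact a bit cleaner than what the paper writes: observing that \autoref{lem:ddm}(b) forces $\D/\D\m$ to be a simple $\D$-module immediately yields injectivity of $\phi$. The reduction via scalar extension is also in the same spirit as the paper, which passes to $\bar K$ rather than a finite separable $L$. The genuine gap --- and you half-acknowledge it --- is the ``combining'' step. Knowing that each set $\{\SRgen{}{t}u'_i\}$ is $L$-independent does not by itself give independence of $\{\SRgen{}{t}(u\otimes 1)\}$: you would need the sum $\sum_i\D'u'_i$ inside $M'$ to be direct, and that does \emph{not} follow from the comaximality $\D'\m'_i+\D'\m'_j=\D'$. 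That comaximality does give a CRT decomposition $\D'/\D'\m'\cong\bigoplus_i\D'/\D'\m'_i$, but it says nothing directly about how the cyclic submodules $\D'u'_i\subset M'$ interact, and the bookkeeping you gesture at has not been carried out.

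The paper avoids the problem by \emph{not} decomposing $u$ at all. It studies the same surjection $\varphi\colon\bar\D/\bar\D\bar\m\twoheadrightarrow\bar\D\bar u$ that you write down, decomposes the \emph{source} via CRT, and proves injectivity directly: if $0\neq w\in\ker\varphi$ has CRT components $w_i\in\bar\D/\bar\D\bar\m_i$, then \autoref{lem:ddm}(b) supplies $f_i\in\bar\m_i$ with $f_iw_i=1$ whenever $w_i\notin K$, and from these one constructs $f\in\bar R$ so that $fw$ is a \emph{nonzero element of} $\bar R/\bar\m$; then $(fw)\cdot\bar u=f(w\cdot\bar u)=0$ contradicts $\ann_{\bar R}(\bar u)=\bar\m$. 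This is the step your proposal is missing: rather than trying to reassemble independence from the $u'_i$ by filtering on operator order, use \autoref{lem:ddm}(b) to reduce an arbitrary kernel element of $\varphi$ to a zero-order (i.e.\ $\bar R$-) element and then invoke $\ann_{\bar R}(\bar u)=\bar\m$. With that replacement your scalar-extension framing would essentially coincide with the paper's argument.
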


\begin{proof}
Without loss of generality, we replace $M$ by its $\D$-submodule generated by $u$.  Let $\bar K$ denote the algebraic closure of $K$ and set $\bar R = \bar K\otimes_KR=\bar K[\Delta]$, $\bar \m = \m \bar R$, $\bar \D = \bar K\otimes_K\D=D(\bar R;\bar K)$, $\bar M = \bar K\otimes_KM=\bar K\otimes_K \D u$.  Then $\bar M$ is a $\bar\D$-module and we have the injection $M\hookrightarrow\bar M$.  It suffices to show that $\{\left(\SRgen{}{t}\right)\bar u\mid\supp{(\vect t)}\in\Delta\}$ is linearly independent in $\bar M$ over $\bar K$, where $\bar u=1\otimes_K u$.  

Since $R/\m$ is separable over $K$, $\bar K\otimes_K(R/\m)$ is reduced.  Thus there exist maximal ideals $\bar{\m_1},\ldots,\bar{\m_s}\subset \bar R$ such that $\bar{\m}=\bigcap_{i=1}^s\bar{\m_i}$. Since $\bar K$ is algebraically closed, each $\bar{\m_i}$ is $K$-rational, and so by the Chinese remainder theorem,
\begin{equation}
\label{eq:ddmcrt}
\bar\D/\bar\D\bar\m\cong\bar\D\otimes_{\bar R}(\bar R/\bar\m)\cong \bar\D\otimes_{\bar R}\left(\bigoplus_{i=1}^s(\bar R/\bar{\m_i})\right)\cong\bigoplus_{i=1}^s(\bar \D/\bar\D\bar{\m_i}).
\end{equation}

Because $\bar\m=\ann_{\bar R}(\bar u)$ and $\bar M=\bar \D\bar u$, there is a $\bar \D$-module surjection 
\begin{subeqnarray*}
\varphi:\bar\D/\bar\D\bar\m  &\twoheadrightarrow \bar M \\
w &\mapsto w\cdot \bar u
\end{subeqnarray*}
We claim that $\varphi$ is injective.  Indeed, suppose there is $w\in \bar \D/\bar \D \bar\m$ such that $w\cdot \bar u=0$ and $w\neq 0$.  For $1\le i\le s$, let $w_i$ be the image of $w$ in $\bar\D/\bar\D\bar{\m_i}$ via \autoref{eq:ddmcrt}.  By \autoref{lem:ddm}(b), there is $f_i\in\bar{\m_i}$ such that $f_iw_i=1$. It follows that there is $f\in R$ such that $f$ is an $R$-multiple of $f_i$ and so $fw_i\in \bar R/\bar \m_i$ for each $i$, and such that $fw_i\neq 0$ for at least one $i$. Thus the tuple $(fw_1,\ldots,fw_s)\in\bigoplus_{i=1}^s(\bar R/\bar{\m_i})$ corresponds via the Chinese remainder theorem to an non-zero element $fw\in \bar R/\bar\m$. But $(fw)\cdot \bar u=f(w\cdot \bar u)=0$, so $fw\in\ann_{\bar R/\bar \m}(\bar u)=0$, a contradiction. 

By \autoref{lem:ddm}(a), $\{\left(\SRgen{}{t}\right)\mid \supp{(\vect t)}\in\Delta\}$ is linearly independent in each $\bar{\D}/\bar{\D}\m_i$, and so is also independent in $\bar{\D}/\bar{\D}\bar{\m}$.  Its image under the isomorphism $\varphi$ is $\{
\left(\SRgen{}{t}\right)\bar u\mid \supp{(\vect t)}\in\Delta\}$ and is linearly independent in $\bar M$. 
\end{proof}

Recall, from \autoref{sec:Stanley-Reisner}, the Hilbert function $H(R,j)$ gives the $K$-vector space dimension of the $j$th graded piece $R_j$.

\begin{prop}[cf. \cite{lyubeznik11}*{Proposition 3.1}]
\label{prop:filtdim}
Suppose $R=K[\Delta]$ is a $T$-space and $K$ is separable.  Let $M$ be a $\D$-module and suppose for $u\in M$, the annihilator $P=\ann_R(u)$ is a prime ideal in $R$. Then for all $i$, $\dim_K(\mathscr F_iu)\geq\sum_{j=0}^iH(R,j)$.
\end{prop}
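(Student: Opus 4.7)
The strategy, following \cite{lyubeznik11}*{Proposition 3.1}, is to exhibit $\sum_{j=0}^i H(R,j)$ many $K$-linearly independent elements inside $\mathscr F_i u$. The plan has three pieces: reduce to $K$ algebraically closed, reduce further to the case of a maximal annihilator, and then apply Proposition~\ref{prop:ddmbasis} and count.

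First I would extend scalars to $\bar K$. Set $\bar R=\bar K\otimes_K R$, $\bar M=\bar K\otimes_K M$, and $\bar u=1\otimes u$; then $\dim_K(\mathscr F_i u)=\dim_{\bar K}(\bar{\mathscr F}_i\bar u)$ and $H(\bar R,j)=H(R,j)$, so the inequality over $\bar K$ descends to $K$. The separability hypothesis on $K$ forces $\bar K\otimes_K(R/P)$ to be reduced, so its zero ideal is an intersection of finitely many primes $\bar P_1,\dots,\bar P_s$; projecting $\bar u$ onto the direct summand of $\bar R/\bar P$ corresponding to a single $\bar P_l$ (as in the Chinese Remainder Theorem argument of the proof of \autoref{prop:ddmbasis}) reduces to the case that $\ann_{\bar R}(\bar u)=\bar P_l$ is prime in $\bar R$ and contracts to $P$.

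Next, invoking the Nullstellensatz (and using $K=\bar K$), choose a $K$-rational maximal ideal $\m=(x_1-c_1,\dots,x_n-c_n)\bar R$ containing $\bar P_l$. To feed \autoref{prop:ddmbasis}, I would produce inside $\bar\D\bar u$ an element $v$ whose $\bar R$-annihilator is exactly $\m$. The idea is to multiply $\bar u$ (or an iterated application of a well-chosen operator in $\bar \D$) by a sequence of elements chosen from each prime along a saturated chain from $\bar P_l$ to $\m$, at each step strictly enlarging the annihilator; primality of the starting annihilator makes this step-by-step enlargement possible. Once $\ann_{\bar R}(v)=\m$, \autoref{prop:ddmbasis} yields the linearly independent family $\{\SRgen{}{\vect t}v:\supp(\vect t)\in\Delta\}$ inside $\bar \D v\subseteq \bar M$.

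The independent family above, together with the $R$-action on $\bar u$ itself (which already supplies $\dim_K(\bar R/\bar P_l)_{\le i}$ independent elements via the map $r\mapsto r\bar u$ on $R_{\le i}$), are to be combined and translated back, via the congruence \eqref{eq:cong} of \autoref{lem:ddm}(a), into $\sum_{j=0}^i H(R,j)$ linearly independent elements of $\mathscr F_i\bar u$. The main obstacle will be exactly this final bookkeeping: matching derivative-type monomials $\SRgen{}{\vect t}$ against the monomial basis of $R_{\le i}$ so that the total count hits $\sum_{j=0}^i H(R,j)$ rather than a strictly smaller number. The $T$-space hypothesis enters essentially here, since \autoref{thm:xdelx} restricts the $K$-basis of $\D$ to the operators $\vect x^{\vect a}\underline{\partial}^{\vect t}$ with $\supp(\vect t)\subseteq\supp(\vect a)\in\Delta$, and it is precisely this combinatorial restriction that allows the monomials of $R_{\le i}$ to be placed in bijection with linearly independent elements of $\mathscr F_i\bar u$.
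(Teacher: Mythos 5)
Your proposal diverges from the paper's proof at the crucial step, and the divergence opens a gap that I do not think can be closed.

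The paper does \emph{not} pass to the algebraic closure and then try to reach a maximal ideal of $\bar R$ from $\bar P_l$. Instead it uses a ``generic point'' localization: after a Noether-normalization-type change of variables, it inverts $K[x_{h+1},\dots,x_n]\setminus\{0\}$ and works over $K'=K(x_{h+1},\dots,x_n)$. The effect is that $R'=K'\tns_S R$ is again a Stanley-Reisner ring $K'[\Gamma]$ (with $\Gamma=\lk_\Delta F$ for $F=\{x_{h+1},\dots,x_n\}$, still a $T$-space by \autoref{ex:link}), and, critically, the \emph{same} element $u$ already has maximal annihilator $PR'$ in $R'$, with $R'/PR'\cong\mathcal K$ finite separable over $K'$. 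So \autoref{prop:ddmbasis} applies directly to $u$ (as $u'=1\tns u$), with no need to manufacture a new element. The ``missing'' count then comes for free: the $K'$-coefficients in the resulting linearly independent family are rational functions in $x_{h+1},\dots,x_n$, so clearing denominators gives the set $Y$ with both the $\partial$-degrees of freedom in $x_1,\dots,x_h$ and the full polynomial degrees of freedom in $x_{h+1},\dots,x_n$, and $|Y_i|=\sum_{j=0}^i H(R,j)$ by a monomial bijection.

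Your approach, by contrast, requires producing an element $v\in\bar\D\bar u$ with $\ann_{\bar R}(v)=\m$ for a maximal ideal $\m\supsetneq\bar P_l$. This is the genuine gap. Multiplication by $r\in\bar R$ does not enlarge the annihilator past $\bar P_l$: since $\ann_{\bar R}(\bar u)=\bar P_l$ is prime, for $r\notin\bar P_l$ one has $\ann_{\bar R}(r\bar u)=(\bar P_l:r)=\bar P_l$, and for $r\in\bar P_l$ one gets $r\bar u=0$. So ``a sequence of elements chosen from each prime along a saturated chain'' achieves nothing. You would need a genuine $\bar\D$-operator that deforms the annihilator to $\m$, and there is no reason such an operator exists; the $\D$-module generated by $\bar u$ is not guaranteed to contain any element with annihilator a strictly larger prime. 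Even granting such a $v$, the final step of merging $\{\SRgen{}{\vect t}v\}$ with $\{r\bar u : r\in R_{\le i}\}$ into a single independent family of size $\sum_{j=0}^i H(R,j)$, all inside $\mathscr F_i\bar u$, is not just ``bookkeeping'': the two families live inside different cyclic $\D$-submodules with no control on their joint independence, and the degree shift from $\bar u$ to $v$ (needed to place $\SRgen{}{\vect t}v$ inside $\mathscr F_i\bar u$) is never bounded. The paper avoids all of this by making the annihilator maximal \emph{without changing the element}, via the base change to $K'$.
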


\begin{proof}
We shall reduce to the hypotheses in \autoref{prop:ddmbasis}.  Let $Q$ denote the prime ideal in $S$ whose image in $R$ is $P$.  Let $h=\Ht Q$ and let $\mathcal K=\Quot(S/Q)$.  The transcendence degree of $\mathcal K$ over $K$ equals $n-h$ and by hypothesis $K$ is separable.  As in \cite{lyubeznik11}*{Proposition 3.1}, we wish to be able to use the assumption that $x_{h+1},\ldots,x_n$ are algebraically independent over $K$ in $\mathcal K$ and thus $\mathcal K$ is finite and separable over the field of rational functions $K'=K(x_{h+1},\dots,x_n)$; then consider $R'=K'\tns_SR$ .  Let $S'=K'\tns_SS\cong K'[x_1,\dots,x_h]$.  

Note $Q$ is a prime containing $I_{\Delta}$ and so contains a minimal prime of $I_{\Delta}$, call it $Q_0$.  By \autoref{thm:primesFacets}, $Q_0$ is generated by variables, and via a permutation of variables we may assert $Q_0=(x_1,\dots,x_l)$ with $l\leq h$.  If $l=h$ then $Q_0=Q$ and it is clear the image of $I_{\Delta}$ in $S'$ remains a monomial ideal.  Otherwise write $Q=Q_0+(f_1,\dots,f_m)$, where $m\geq h-l$, and the generators $f_i$ are polynomials in the variables $x_{l+1},\dots,x_n$ only.  Let $N\geq 1$ be an integer bounding the highest power of any $x_{l+1},\dots,x_n$ appearing in any of the $f_i$s.  We use the change of variables 
\begin{equation}
\label{eq:noethNorm}
x_i \mapsto x_i+x_{l+1}^{N^{i-h}}+x_{l+2}^{N^{i-h}}\cdots +x_h^{N^{i-h}}
\end{equation}
for $i=h+1,\dots,n$ to get the algebraic independence of $x_{h+1},\dots,x_n$.  

The functor $K'\tns_S(?)$ is equivalent to the operation of localizing at the multiplicative system consisting of polynomials over $K$ in the variables $x_{h+1},\dots,x_n$.  Therefore $R'=S'/I_{\Delta}S'$.  The image of $I_{\Delta}$ under \autoref{eq:noethNorm} may no longer be a monomial ideal.  However, apply the inverse of \autoref{eq:noethNorm} in $S'$; then, if such $x_i$ for $i=h+1,\dots,n$ appears in a minimal prime of $I_{\Delta}$, it is a unit.  Furthermore, note that $F=\{x_{h+1},\ldots,x_n\}$ is a face in $\Delta$ since $x_{h+1}\cdots x_n\notin I_{\Delta}$.  Thus $R'\cong K'[\Gamma]$, where 
\[
\Gamma=\{G\in\Delta \mid F\cup G\in \Delta,\, F\cap G=\emptyset\}
\]
(i.e., $\Gamma$ is the link of $F$ in $\Delta$).  Since $K[\Delta]$ is a $T$-space, so is $K'[\Gamma]$ (see \autoref{ex:link}).  By hypothesis, $\mathcal K\cong \Quot(S'/QS')= S'/QS'=R'/PR'$ is separable over $K'$.  

We now proceed as in \cite{lyubeznik11}*{Proposition 3.1}.  Let $\D'=D(R';K')$ and $M'=K'\tns_SR$.  From \autoref{thm:xdelx}, $\D'$ is a free $R'$-module on the products $x_1^{a_1}\partial_1^{t_1}\cdots x_h^{a_h}\partial_h^{t_h}=x_1^{a_1}\cdots x_h^{a_h}\partial_1^{t_1}\cdots \partial_h^{t_h}$ where $a_i=0$ if $t_i=0$, $a_i=1$ if $t_i\geq 1$, and $\supp(a_1,\dots,a_h)\in \Gamma\subset \Delta$.  Its action on $M$ naturally extends to $M'$ and so applying \autoref{prop:ddmbasis}, we get the $K'$-basis $\{
\left(\SRgen{}{t}\right)u'\mid \supp{(\vect t)}\in\Gamma\}\subset M'$, where $u'=1\tns u$.  It follows that the set 
\[
Y:=\{(x_{h+1}^{a_{h+1}}\cdots x_n^{a_n})x_1^{a_1}\partial_1^{t_1}\cdots x_h^{a_h}\partial_h^{t_h}u\mid \supp(\vect t)\subseteq \supp(\vect a)\in\Gamma\}
\] 
is linearly independent in $M$.  Let 
\[
Y_i=K\cdot \{x_{h+1}^{a_{h+1}}\cdots x_n^{a_n}x_1^{a_1}\partial_1^{t_1}\cdots x_h^{a_h}\partial_h^{t_h}u\in Y\mid a_1+\cdots+a_n+t_1+\cdots+ t_h\le i\}.
\] 
Then $Y_i\subseteq\mathscr F_iu$ and so $\dim_K\mathscr F_iu\geq |Y_i|$.  Since the set $Y_i$ is in bijection with the set of monomials not in the face ideal $I_{\Delta}$ of degree at most $i$, $|Y_i|=\sum_{j=0}^iH(R,j)$ and we get the result.
\end{proof}

\begin{thm}[cf. \cite{lyubeznik11}*{Theorem 3.5 and Corollary 3.3}]
\label{thm:finitelength}
Suppose $R=K[\Delta]$ is a $T$-space and let $r=\dim R$.  Then every holonomic $\D$-module $M$ has finite length in the category of $\D$-modules, bounded by a constant multiple of $r!$.
\end{thm}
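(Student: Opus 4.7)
The plan is to adapt Bernstein's classical argument bounding the $\D$-length of a holonomic module by its multiplicity. Fix a holonomic $\D$-module $M$ with a compatible $K$-filtration $\mathscr G=[\mathscr G_0\subset\mathscr G_1\subset\cdots]$ satisfying $\dim_K\mathscr G_i\leq Ci^r$ for some constant $C$. The goal is to bound, uniformly in the chain, the length of any strictly ascending chain of $\D$-submodules of $M$.

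First I would fix such a chain $0=M_0\subsetneq M_1\subsetneq\cdots\subsetneq M_k\subseteq M$. Because $R$ is Noetherian, each nonzero quotient $M_l/M_{l-1}$ admits an associated prime $P_l\subset R$, realized as the $R$-annihilator of some element $\bar v_l\in M_l/M_{l-1}$. Lift each $\bar v_l$ to some $v_l\in M_l$, and choose $i_0$ large enough that $v_1,\ldots,v_k\in\mathscr G_{i_0}$.

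Next, for each $i\geq i_0$ I would consider the induced filtration $\mathscr G_i\cap M_0\subseteq\mathscr G_i\cap M_1\subseteq\cdots\subseteq\mathscr G_i\cap M_k$ of the $K$-vector space $\mathscr G_i\cap M_k$. The $l$th successive quotient injects into $M_l/M_{l-1}$, and its image there contains $\mathscr F_{i-i_0}\bar v_l$ since compatibility of $\mathscr F$ with $\mathscr G$ forces $\mathscr F_{i-i_0}v_l\subseteq\mathscr G_i$. Applying \autoref{prop:filtdim} to the $\D$-module $M_l/M_{l-1}$ and the element $\bar v_l$ (whose annihilator $P_l$ is prime) yields
\[
\dim_K\mathscr F_{i-i_0}\bar v_l \;\geq\; \sum_{j=0}^{i-i_0}H(R,j),
\]
and summing over $l$ gives the central inequality
\[
Ci^r \;\geq\; \dim_K\mathscr G_i \;\geq\; k\sum_{j=0}^{i-i_0}H(R,j).
\]
Since $H(R,j)$ eventually agrees with the Hilbert polynomial of $R$, which has degree $r-1$ with positive leading coefficient $e/(r-1)!$ (where $e$ is the multiplicity of $R$), the right-hand sum grows like $(e/r!)i^r$. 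Dividing by $i^r$ and letting $i\to\infty$ therefore bounds $k$ by $Cr!/e$, a constant multiple of $r!$ depending only on $R$ and on the holonomicity constant $C$ of $M$.

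The main obstacle will be fitting the hypotheses of \autoref{prop:filtdim}: it requires that the residue fields $R/P_l$ be separable over $K$, so either I would assume $K$ is perfect from the outset, or first extend scalars to an algebraic closure as in the descent argument used inside the proof of \autoref{prop:ddmbasis} and verify that the holonomic filtration survives this extension. A subsidiary technical point is checking that the asymptotic comparison above is uniform in the chain; this is immediate once one observes that the right-hand side eventually exceeds the left whenever $k$ is larger than the stated bound, so the bound itself depends only on $C$, $e$, and $r$.
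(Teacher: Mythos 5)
Your proof matches the paper's argument in all essential respects: both induce a $K$-filtration on the successive quotients of a chain of $\D$-submodules, apply \autoref{prop:filtdim} to an element with prime annihilator in each quotient, and compare the resulting lower bound $k\sum_{j=0}^{i-i_0}H(R,j)$ against the holonomic upper bound $Ci^r$ as $i\to\infty$. Your use of a single uniform $i_0$ (rather than separate $j_s$ for each quotient, as the paper does) is a minor streamlining, and your flag about the separability hypothesis needed to invoke \autoref{prop:filtdim} is a legitimate observation -- the paper's theorem statement omits it but implicitly inherits that requirement.
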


\begin{proof}
Suppose $0=M_0\subsetneq M_1\subsetneq\cdots\subsetneq M_l=M$ is a $\D$-module filtration of $M$.  By the holonomicity hypothesis, $M$ also has a $K$-filtration $\mathscr G$ as in \autoref{dfn:holonomicity}, particularly, for all $i$, $\dim_K\mathscr G_i\leq Ci^r$ for some constant $C$. For each $s=1,\dots,l$, $\mathscr G$ induces a $K$-filtration $\mathscr G^s$ on $N_s:=M_s/M_{s-1}$, given by  
\[
\mathscr G^s_i=(\mathscr G_i\cap M_s)/(\mathscr G_i\cap M_{s-1}).
\]

Choose a prime $P_s\in\Ass_R(N_s)$. There exists $u_s\in N_s$ such that $P_s=\ann_R(u_s)$, and there exists $j_s$ such that $u_s\in\mathscr G^s_{j_s}$.  Since $\mathscr G^s$ is a $K$-filtration, we have $\mathscr F_{i-{j_s}}u_s\subseteq\mathscr G^s_i$ for all $i\geq j_s$.  By \autoref{prop:filtdim}, 
\[
\dim_K\mathscr G^s_i\geq \dim_K(\mathscr F_{i-j_s}u_s)\geq \sum_{j=0}^{i-j_s}H(R,j).
\] 
We also have, for each $i$, $\mathscr G_i=\oplus_{s=1}^lN_s$, because these are vector spaces over a field.  Therefore
\begin{equation}
\label{eq:filtpoly}
Ci^r\ge\dim_K\mathscr G_i=\sum_{s=1}^l \dim_K \mathscr G^s_i\ge\sum_{s=1}^l\sum_{j=0}^{i-j_s}H(R,j).
\end{equation}

The Hilbert function $H(R,j)$ equals the Hilbert polynomial for $R$ when $j>0$ and so the higher itereated Hilbert function $H_1(R,i-j_s)=\sum_{j=0}^{i-j_s}H(R,j)$ (see \cite{bruns+herzog}*{Section 4.1})  is a polynomial of degree $r$ in $i$ with leading coefficient at least $\frac{1}{r!}$. Thus \autoref{eq:filtpoly} yields $C\geq\frac{l}{r!}$, which implies $l\leq Cr!$ 
\end{proof}

\begin{thm}
\label{thm:Rholonomic}
Suppose $R=K[\Delta]$ is a $T$-space.  Then $R$ is a holonomic $\D$-module. 
\end{thm}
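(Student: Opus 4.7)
The plan is to use the natural grading on $R = K[\Delta] = S/I_\Delta$ to build a $K$-filtration that satisfies Definition \ref{dfn:holonomicity}(c). Specifically, since $R$ is a standard graded $K$-algebra with graded pieces $R_j$, I would set
\[
\mathscr{G}_i := \bigoplus_{j=0}^i R_j,
\]
the $K$-span of the images of monomials in $S$ of total degree at most $i$. Clearly $\bigcup_i \mathscr{G}_i = R$, so I just have to check compatibility with the Bernstein filtration $\mathscr{F}$ and establish the polynomial growth bound.

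For compatibility, I would take a generator $\SRgen{a}{t}$ of $\mathscr{F}_j$ (so $|\vect a| + |\vect t| \leq j$) and a monomial $\vect x^{\vect b} \in \mathscr{G}_i$ (so $|\vect b| \leq i$). A direct computation using the formula in \autoref{ex:weyl} gives
\[
\SRgen{a}{t}\cdot \vect x^{\vect b} = \binom{\vect b}{\vect t}\, \vect x^{\vect a + \vect b - \vect t},
\]
which has total degree $|\vect a| + |\vect b| - |\vect t| \leq j + i$. Thus $\mathscr{F}_j \mathscr{G}_i \subseteq \mathscr{G}_{i+j}$, as required.

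For the growth bound, note that
\[
\dim_K \mathscr{G}_i = \sum_{j=0}^i H(R,j).
\]
Since $R = K[\Delta]$ is a Stanley-Reisner ring, its Hilbert function agrees with the Hilbert polynomial for $j > 0$ (\cite{bruns+herzog}*{Theorem 5.1.7}), and the latter is a polynomial in $j$ of degree $r - 1 = \dim R - 1$. Hence the iterated Hilbert function $H_1(R,i) = \sum_{j=0}^i H(R,j)$ is a polynomial of degree $r$ in $i$ for $i \gg 0$ (cf. \cite{bruns+herzog}*{Section 4.1}). Consequently there is a constant $C$ with $\dim_K \mathscr{G}_i \leq C i^r$ for all $i \geq 0$, proving that $R$ is holonomic.

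There is no serious obstacle here: the key point is simply that the total-degree filtration on $R$ inherited from $S$ is both compatible with $\mathscr{F}$ and has polynomial growth of the correct order, which is immediate once one unwinds the definition of $\mathscr{F}_j$ and invokes the standard asymptotic behavior of the Hilbert function of $K[\Delta]$.
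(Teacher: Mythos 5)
Your proof is correct and takes essentially the same approach as the paper: use the total-degree filtration $\mathscr G_i = \bigoplus_{j=0}^i R_j$ inherited from the standard grading, and bound $\dim_K \mathscr G_i = \sum_{j=0}^i H(R,j)$ by the first iterated Hilbert function, a polynomial of degree $r=\dim R$. The only difference is that you explicitly verify the compatibility condition $\mathscr F_j\mathscr G_i\subseteq\mathscr G_{i+j}$ via the computation $\SRgen{a}{t}\cdot\vect x^{\vect b}=\binom{\vect b}{\vect t}\vect x^{\vect a+\vect b-\vect t}$, whereas the paper simply asserts that $\mathscr G$ is clearly a $K$-filtration.
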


\begin{proof}
Put $r=\dim R$, and define an  $R$-filtration $\mathscr G= [ 0=\mathscr G_0\subset \mathscr G_1\subset \cdots\subset R]$, where $\mathscr G_i=\oplus_{j=0}^iR_j$ and for each $j\geq 0$, $R_j$ is the degree $j$ summand in the standard $\mathbb N$-grading for $R$.  It is clear that $\mathscr G$ is a $K$-filtration.  We claim it also satisfies the holonomicity property from \autoref{dfn:holonomicity}, namely, that there exists a constant $C$ such that for all $i$, $\dim_K{\mathscr G_i}\leq Ci^r$.  Since $R$ is the quotient of a polynomial ring over $K$ by a monomial ideal, $\dim_K{R_j}=H(R,j)$.  Each submodule $\mathscr G_i$ has $K$-length
\[
\dim_K{\mathscr G_i}=\sum_{j=0}^iH(R,j)=H_1(R,i), 
\]
given by the first higher iterated Hilbert function for $R$, a polynomial of degree $r$ in $i$ with leading coefficient at least $\frac{1}{r!}$.  
\end{proof}

\begin{thm}[cf. \cite{lyubeznik11}*{Corollary 3.6}]
\label{thm:Rfholonomic}
Suppose $R=K[\Delta]$ is a $T$-space.  Then for any $f\in R$, if $M$ is holonomic then so is $M_f$.  
\end{thm}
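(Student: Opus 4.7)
The plan is to build an explicit holonomic $K$-filtration on $M_f$ from a holonomic filtration $\mathscr G$ on $M$. By \autoref{rmk:Daction}, $M_f$ acquires a $\D$-module structure via \autoref{eq:quotientRule}. Let $d=\deg f$ and suppose $\dim_K\mathscr G_i\leq Ci^r$. Define $\mathscr G'_i\subset M_f$ to be the $K$-span of elements $u/f^i$ with $u\in\mathscr G_{(d+1)i}$. The inclusion $\mathscr G'_i\subset\mathscr G'_{i+1}$ and the equality $\bigcup_i\mathscr G'_i=M_f$ follow by rewriting $u/f^i=(fu)/f^{i+1}$ and noting $fu\in\mathscr F_d\mathscr G_{(d+1)i}\subset\mathscr G_{(d+1)(i+1)}$. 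The dimension bound is immediate,
\[
\dim_K\mathscr G'_i\leq\dim_K\mathscr G_{(d+1)i}\leq C(d+1)^r\,i^r,
\]
so growth remains polynomial of degree at most $r=\dim R$.

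The main technical step is to verify $\mathscr F_j\mathscr G'_i\subset\mathscr G'_{i+j}$. By \autoref{thm:xdelx} the algebra $\D$ is generated over $R$ by operators $x\partial^t$, so by multiplicativity of $\mathscr F$ it suffices to check compatibility on multiplication by elements of $R$ (routine, using $\mathscr F_e\mathscr G_{(d+1)i}\subset\mathscr G_{(d+1)(i+e)}$) and on each generator $x\partial^t$ of Bernstein degree $t+1$. The latter I would handle by induction on $t$ via \autoref{eq:quotientRule}. The ``principal'' term $(x\partial^t)(u)/f^i$ lands in $\mathscr G'_{i+t+1}$ after clearing denominators by $f^{t+1}$. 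For each inductive term, the polynomial $\tfrac{1}{s!}\tfrac{\partial^s f^i}{\partial x^s}$ has total degree at most $id-s$ and, via the divided-power Leibniz rule (valid in any characteristic), factors as $f^{i-s}G_s$ with $\deg G_s\leq s(d-1)$ when $s\leq i$; when $s>i$ one instead multiplies numerator and denominator by $f^{s-i}$. In either regime, the arithmetic identity
\[
(d+1)(i+t-s+1)+s(d-1)=(d+1)(i+t+1)-2s
\]
places the contribution in $\mathscr G_{(d+1)(i+t+1)}$, closing the induction.

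The main obstacle is this degree bookkeeping: one must handle the two regimes $s\leq i$ and $s>i$ uniformly, tracking how $\tfrac{\partial^s f^i}{\partial x^s}$ interacts with the powers of $f$ in the denominator after applying the recursive quotient rule. The coefficient $d+1$ appearing in the definition of $\mathscr G'_i$ is exactly tight for the displayed identity — any smaller multiple would leave the inductive second term outside $\mathscr G'_{i+t+1}$. Once compatibility is secured, $\mathscr G'$ exhibits $M_f$ as holonomic in the sense of \autoref{dfn:holonomicity}, so the theorem follows.
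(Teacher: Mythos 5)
Your proposal takes essentially the same route as the paper: the same filtration $\mathscr G'_i=K\cdot\{u/f^i\mid u\in\mathscr G_{(d+1)i}\}$, the same dimension bound $C(d+1)^r i^r$, and the same induction on $t$ through the quotient rule \autoref{eq:quotientRule}, with the same degree bookkeeping driving the key inequality $(d+1)(i+t-s+1)+s(d-1)=(d+1)(i+t+1)-2s<(d+1)(i+t+1)$. The only presentational differences are that you explicitly invoke \autoref{thm:xdelx} to reduce compatibility to $R$-multiplication and the generators $x\partial^t$, and you spell out the regime $s>i$ which the paper handles implicitly (since $v_{s,j}=f^{s-j}\,\partial^s f^j/\partial x^s$ remains a polynomial of degree $s(d-1)$ there); neither changes the substance of the argument.
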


\begin{proof}
Let $d$ denote the degree of the polynomial $f\in R$ and let $r=\dim R$.  Suppose $\mathscr G_0\subset \mathscr G_1\subset\cdots$ is a $K$-filtration of $M$ with $\dim_K{\mathscr G_i}\leq Ci^r$ for some constant $C$.  Define
\[
\mathscr G_i'=K\cdot \left\{\frac{u}{f^i}\mid u\in \mathscr G_{i(d+1)}\right\},
\]
which gives a filtration, call it $\vect M'$, on $M_f$.   The meat of this proof is in showing $\mathscr G'$ is a $K$-filtration -- if $\mathscr G'$ is a $K$-filtration, then  
\[
\dim_K(\mathscr G_i')\leq \dim_K(\mathscr G_{i(d+1)})\leq C(i(d+1))^r
\]
and we can put $C'=C(d+1)^r$ to ensure $M_f$ satisfies the holonomicity condition.  

We first show  $\cup_i\mathscr G_i'=M_f$. We already have the inclusion ``$\subseteq$".  Choose $\frac{u}{f^w}\in M_f$, with $u\in \mathscr G_i$ for some $i$.  If $i\leq w(d+1)$, then $u\in \mathscr G_i\subseteq \mathscr G_{w(d+1)}$ and hence, by definition, $\frac{u}{f^w}\in \mathscr G_w'$.  On the other hand, if $i>w(d+1)$ then put $j=i-w(d+1)$.  Since $f^j\in\mathscr F_{jd}$, because $\deg f=d$, it follows that $f^ju\in \mathscr G_{jd+i}$.  Rewrite
\[
jd+i=jd+\left(j+w(d+1)\right)=(j+w)(d+1)
\]
to conclude $\frac{u}{f^w}=\frac{f^ju}{f^{j+w}}\in \mathscr G_{j+w}'\subset \cup_i\mathscr G_i'$.

We now show that $\mathscr F_i\mathscr G_j'\subset \mathscr G_{i+j}'$ for all $i,j$.  In other words, we show for $t\geq 0$, $x\partial^t(\mathscr G_j')\subset \mathscr G_{(t+1)+j}'$.  (Recall, $x\partial^t$ refers to any of the operators $x_i\partial_i^{t_i}$ in $\D$, with the subscript $i$ suppressed for readability.)  This is done by induction on $t$.  Let $\frac{u}{f^j}\in \mathscr G_j'$, with $u\in \mathscr G_{j(d+1)}$.  For the base case we just have $x\partial^0=x$.  Since $xf\in\mathscr F_{d+1}$, it follows that $(xf)(u)\in M_{j(d+1)+(d+1)}$.  Then $(x)\frac{u}{f^j}=\frac{(xf)(u)}{f^{j+1}}\in \mathscr G_{j+1}'$, as desired.

Suppose the induction hypothesis, that for $1\leq s\leq t$, $(x\partial^{t-s})\frac{u}{f^j}\in \mathscr G'_{(t-s+1)+j}$.  By \autoref{eq:quotientRule}, along with \autoref{rmk:Daction}, it suffices to show each of  
\begin{equation}
\label{eq:terms}
\frac{1}{f^j}x\partial^t(u)\quad
\text{and}\quad
\frac{1}{f^j}\frac{\partial^sf^j}{\partial x^s}\cdot x\partial^{t-s}\left(\frac{u}{f^j}\right)
\end{equation}
are in $\mathscr G_{(t+1)+j}'$.  Since $f\in\mathscr F_d$, $x\partial^t\in\mathscr F_{t+1}$, and $u\in \mathscr G_{j(d+1)}$, we have
\begin{subeqnarray*}
f^{t+1}\cdot x\partial^t &\in& \mathscr F_{d(t+1)+t+1}=\mathscr F_{(t+1)(d+1)}  \\
f^{t+1}\cdot x\partial^t(u) &\in& \mathscr G_{(t+1)(d+1)+j(d+1)}=\mathscr G_{(t+1+j)(d+1)}.
\end{subeqnarray*}
Thus
\[
\frac{1}{f^j}x\partial^t(u)=\frac{(f^{t+1}x\partial^t)(u)}{f^{t+1+j}}\in \mathscr G_{t+1+j}'.
\]

For the other term in \autoref{eq:terms} there exists, by induction hypothesis, an element $u_s\in \mathscr G_{(t-s+1+j)(d+1)}$ such that 
\[
x\partial^{t-s}\left(\frac{u}{f^j}\right)=\frac{u_s}{f^{t-s+1+j}}.
\]
We claim, by induction on $j$, that $f^{j-s}$ divides $\frac{\partial^sf^j}{\partial x^s}$.  But this is just the usual power rule in $D(S;K)$: when $j=0$, we have $\frac{\partial^s(1)}{\partial x^s}=0$, which certainly divides $f^{-s}$; then for any $j$, given the induction hypothesis, write $\frac{\partial^sf^{j-1}}{\partial x^s}$ as a quotient, $\frac{v_{s,j-1}}{f^{s-j+1}}$.  Using the usual product rule formula for higher-order derivatives, 
\begin{subeqnarray*}
\frac{\partial^t(f^{j-1}\cdot f)}{\partial x^t} 
	&=& \sum_{s=0}^t\frac{\partial^s f^{j-1}}{\partial x^s}\cdot \frac{\partial^{t-s}f}{\partial x^{t-s}} \\
	&=& \sum_{s=0}^t\frac{v_{s,j-1}}{f^{s-j+1}}\cdot \frac{v_{t-s,1}}{f^{(t-s)-1}} \\
	&=& \sum_{s=0}^t\frac{v_{s,j-1}v_{t-s,1}}{f^{s-j+1+t-s-1}},
\end{subeqnarray*}  
we can conclude $\frac{\partial^tf^j}{\partial x^t}$ divides $f^{j-t}$ and since $t$ was arbitrary we replace it with $s$.  We have
\begin{equation}
\label{eq:quotient}
\frac{1}{f^j}\frac{\partial^sf^j}{\partial x^s}\cdot \partial^{t-s}\left(\frac{u}{f^j}\right)=\frac{v_{s,j}u_s}{f^{t+j}}.
\end{equation}
The polynomial $\frac{\partial^sf^j}{\partial x^s}$ has degree $dj-s$, so the polynomial $v_{s,j}$ has degree $dj-s-d(j-s)=ds-s$.  Hence
\[
v_{s,j}u_s\in \mathscr G_{ds-s+(t-s+1+j)(d+1)}\subset \mathscr G_{(t+1+j)(d+1)},
\]
because 
\[
(t-s+1+j)(d+1)+s(d-1)<(t-s+1+j)(d+1)+s(d+1)=(t+1+j)(d+1).
\]
Thus $\frac{v_{s,j}u_s}{f^j}\in \mathscr G_{t+j}'$, which, by \autoref{eq:quotient}, means we are done. 
\end{proof}

In particular, $R_f$ is a holonomic $\D$-module. 

\begin{cor}
\label{cor:mainThm}
Suppose $R=K[\Delta]$ is a Stanley-Reisner ring over a field $K$, and its associated simplicial complex $\Delta$ is a $T$-space.  Let $I\subset R$ denote an ideal in $R$.  Then all local cohomology modules $H^j_I(R)$ have finitely many associated primes.
\end{cor}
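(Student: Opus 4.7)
The plan is to assemble the machinery already developed in the excerpt into a short chain of implications. The overall strategy is: holonomicity of $R$, passed to localizations, yields finite $\D$-length for each $R_f$; combined with the formal categorical argument from \autoref{cor:finiteLengthObjects}, this bounds the associated primes of any object of $C(R)$, and in particular of $H^j_I(R)$.

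First, I would invoke \autoref{thm:Rholonomic} to conclude that $R$ itself is a holonomic $\D$-module under \autoref{dfn:holonomicity}. Then, for any $f \in R$, \autoref{thm:Rfholonomic} (applied with $M = R$) gives that the localization $R_f$ is also holonomic. Next I apply \autoref{thm:finitelength}, which says that every holonomic $\D$-module has finite length in the category of $\D$-modules, to deduce that $R_f$ has finite $\D$-length for every $f \in R$. This verifies the hypothesis of \autoref{cor:finiteLengthObjects}.

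The second ingredient is the membership of $H^j_I(R)$ in the category $C(R)$. Since $R = R_1$ lies in $C_0(R) \subset C(R)$, the proposition on closure of $C(R)$ under localization and local cohomology (\autoref{sec:localization}) applied to $M = R$ shows that $H^j_I(R) \in C(R)$ for every ideal $I \subset R$ and every $j \geq 0$. With both the finite-length hypothesis on each $R_f$ and the membership $H^j_I(R) \in C(R)$ established, part (b) of \autoref{cor:finiteLengthObjects} then delivers the conclusion $|\Ass_R H^j_I(R)| < \infty$.

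There is no genuine obstacle left at this stage; all the hard work has been absorbed into the earlier theorems. The only thing one must be careful about is checking that the hypotheses of \autoref{cor:finiteLengthObjects} are stated in exactly the form we need, namely that they require finite $\D$-length of $R_f$ for \emph{every} $f \in R$ (which we have by combining \autoref{thm:Rfholonomic} with \autoref{thm:finitelength}) and that $H^j_I(R) \in C(R)$ (which follows from the closure properties of $C(R)$). Once these are verified, the corollary follows in one line.
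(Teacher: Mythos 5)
Your proof is correct and matches the argument that the paper clearly intends (the paper states this corollary without a written proof, leaving exactly the assembly you carry out implicit). The chain \autoref{thm:Rholonomic} $\Rightarrow$ \autoref{thm:Rfholonomic} $\Rightarrow$ \autoref{thm:finitelength} establishes finite $\D$-length of every $R_f$, the closure proposition in \autoref{sec:localization} places $H^j_I(R)$ in $C(R)$, and \autoref{cor:finiteLengthObjects}(b) then gives finiteness of associated primes.
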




\bibliographystyle{plain}
\bibliography{
wheelerbib}

\end{document}